\DeclareMathAlphabet{\mathsf}{OT1}{\sfdefault}{m}{n}
\SetMathAlphabet{\mathsf}{bold}{OT1}{\sfdefault}{b}{n}
\numberwithin{equation}{section}
\definecolor{WIMgreen}{RGB}{60 134 132}
\definecolor{red_pers}{RGB}{204 37 41}
\definecolor{UMblue}{RGB}{4 47 86}
\definecolor{myteal}{RGB}{0 123 137}
\definecolor{nd}{RGB}{0 0 0}
\definecolor{dartmouthgreen}{rgb}{0.05, 0.5, 0.06}\definecolor{cobalt}{rgb}{0.0, 0.28, 0.67}\definecolor{coolblack}{rgb}{0.0, 0.18, 0.39}
\definecolor{glaucous}{rgb}{0.38, 0.51, 0.71}\definecolor{hooker\'sgreen}{rgb}{0.0, 0.44, 0.0}\definecolor{lemonchiffon}{rgb}{1.0, 0.98, 0.8}\definecolor{oucrimsonred}{rgb}{0.6, 0.0, 0.0}\definecolor{radicalred}{rgb}{1.0, 0.21, 0.37}\definecolor{raspberry}{rgb}{0.89, 0.04, 0.36}\definecolor{royalazure}{rgb}{0.0, 0.22, 0.66}
\definecolor{dex}{rgb}{0.05,0.5,0.06}
\theoremstyle{plain}
\newtheorem{theorem}{Theorem}[section]
\newtheorem{proposition}[theorem]{Proposition}
\newtheorem{lemma}[theorem]{Lemma}
\theoremstyle{definition}
\theoremstyle{remark}
\def\E{\mathbb{E}}
\def\N{\mathbb{N}}
\def\N{\mathbb{N}}
\def\R{\mathbb{R}}
\definecolor{darkred}{rgb}{0,0.6,0}
\def\cN{\mathcal{N}}
\def\Var{\mathrm{Var}}
\def\Cov{\mathrm{Cov}}
\newcommand{\cF}{\mathcal{F}}
\newcommand{\ep}{\varepsilon}
\newcommand{\cO}{\mathcal{O}}
\newcommand{\sign}{\operatorname{sign}}
\newcommand{\tr}{\operatorname{tr}}
\renewcommand{\hat}{\widehat}
\newcommand*\samethanks[1][\value{footnote}]{\footnotemark[#1]}
\newcommand{\vertiii}[1]{{\left\vert\kern-0.25ex\left\vert\kern-0.25ex\left\vert #1
		\right\vert\kern-0.25ex\right\vert\kern-0.25ex\right\vert}}
\let\originalleft\left
\let\originalright\right
\renewcommand{\left}{\mathopen{}\mathclose\bgroup\originalleft}
\renewcommand{\right}{\aftergroup\egroup\originalright}
\newcommand{\GD}{\mkern1mu \mathsf{GD}}
\newcommand{\SGD}{\mkern1mu \mathsf{SGD}}
\newcommand{\FGD}{\mkern2mu \mathsf{FGD}}
\definecolor{myteal}{RGB}{0 123 137}
\definecolor{cs}{rgb}{0 0 0}
\definecolor{radicalred}{rgb}{1.0, 0.21, 0.37}
\definecolor{dex}{RGB}{0 0 0}
\title{Improving the Convergence Rates of Forward Gradient Descent with Repeated Sampling}
\author{Niklas Dexheimer\thanks{ University of Twente \newline Faculty of Electrical Engineering, Mathematics and Computer Science \newline Drienerlolaan 5, 7522 NB Enschede, The Netherlands.\newline Email: \href{mailto:n.dexheimer@utwente.nl}{n.dexheimer@utwente.nl}/\href{mailto:a.j.schmidt-hieber@utwente.nl}{a.j.schmidt-hieber@utwente.nl}
}
\qquad Johannes Schmidt-Hieber\samethanks
}
\begin{document}
\maketitle
\begin{abstract}
Forward gradient descent (FGD) has been proposed as a biologically more plausible alternative of gradient descent as it can be computed without backward pass. Considering the linear model with $d$ parameters, previous work has found that the prediction error of FGD is, however, by a factor $d$ slower than the prediction error of stochastic gradient descent (SGD). In this paper we show that by computing $\ell$ FGD steps based on each training sample, this suboptimality factor becomes $d/(\ell \wedge d)$ and thus the suboptimality of the rate disappears if $\ell \gtrsim d.$ We also show that FGD with repeated sampling can adapt to low-dimensional structure in the input distribution. The main mathematical challenge lies in controlling the dependencies arising from the repeated sampling process. 
%The results also address the case of low-rank training data, which can be beneficial in high-dimensional problems. As forward gradient descent only requires forward passes through a network, which are feasible in the human brain, our results show that rate-optimal results are achievable by biologically plausible optimization methods.
\end{abstract}
\section{Introduction}
Gradient descent (GD) is a cornerstone optimization method in machine learning and statistics. While traditional gradient descent relies on evaluating exact gradients, stochastic gradient descent (SGD) only evaluates the gradient for smaller batches of the training data in order to reduce the potentially enormous numerical costs of computing the exact gradient. GD and SGD are both non-local training rules as the updates of each single parameter depend on the values of all other parameters. However, it is biologically implausible that such non-local training rules underlie the learning of the brain, as this would require each neuron being informed about the state of all other neurons before each update \cite{ExcitementCrick,whit19,tava19,lillicrap20, trappenberg22}. Consequently, interest in gradient-free and in particular zeroth-order methods for optimization has emerged, aiming for a better mathematical understanding of the brain in modelling biological neural networks (BNNs). An example is the forward gradient descent (FGD) algorithm, introduced in \cite{baydin2022}.

Before presenting FGD, we firstly introduce a general supervised learning framework to illustrate its connection to zeroth-order optimization methods. For this we assume, we are given $n$ independent, identically distributed (iid) training samples $(X_1,Y_1), (X_2,Y_2),\\ \ldots , (X_n,Y_n) \in \R^d\times\R$ satisfying
\[Y_i=f_{\theta_{\star}}(X_i)+\ep_i,\quad i=1,2,\ldots,n. \]
Here $f_{\theta_{\star}} : \R^d\to\R$ is an unknown regression function  depending on a $p$-dimensional parameter vector $\theta_{\star},$ and $\ep_1,\ep_2,\ldots, \ep_n$ are iid centered noise variables. 

% For a chosen loss function $L\colon \R^2\to \R$ and $\hat{\theta}$ an estimator of the true parameter $\theta_{\star},$ the empirical loss or training loss of $\hat{\theta}$ is 
% \[\frac 1n \sum_{i=1}^n L\big(Y_i,f_{\hat{\theta}}(X_i)\big). \] 
The standard strategy in ML is to decrease the empirical loss, defined 
as
\[\frac 1n \sum_{i=1}^n L\big(Y_i,f_{\hat{\theta}}(X_i)\big), \] 
via a form of gradient descent (GD). Given a possibly randomized initialization $\theta_0\in\R^p,$ and learning rates $\alpha_1,\alpha_2,\ldots,$ GD leads to the recursive update rule 
\begin{equation}\label{eq: gd}
\hat{\theta}^{\GD}_k=\hat{\theta}^{\GD}_{k-1}-\alpha_k \frac{1}{n}\sum_{i=1}^n\nabla_\theta L\big(Y_i,f_{\hat{\theta}^{\GD}_{k-1}}(X_i)\big),\quad k=1,\ldots, n,
\end{equation}
where $\nabla_\theta L$ denotes the gradient of $L$ with respect to the parameter vector $\theta$. Stochastic gradient descent (SGD) computes the gradient based on one training sample and is given by
\begin{equation}\label{eq: sgd}
    \hat{\theta}^{\SGD}_k=\hat{\theta}^{\SGD}_{k-1}-\alpha_k \nabla_\theta L\big(Y_k,f_{\hat{\theta}^{\SGD}_{k-1}}(X_k)\big),\quad k=1,\ldots,n.
\end{equation}
Forward gradient descent (FGD) replaces the gradient in \eqref{eq: sgd} by a noisy estimate, the so-called forward gradient, leading to the update rule
\begin{equation}\label{eq: fgd}
    \hat{\theta}^{\FGD}_k=\hat{\theta}^{\FGD}_{k-1}-\alpha_k \Big(\nabla_\theta L\big(Y_k,f_{\hat{\theta}^{\FGD}_{k-1}}(X_k)\big)\Big)^\top \xi_k\xi_k,\quad k\in[n],
\end{equation}
with $\xi_1, \ldots, \xi_n$ an iid sequence of $p$-dimensional standard normal random vectors.
For any deterministic $v\in\R^p,$
\[\E[v^\top\xi_1\xi_1]=v, \]
showing that the forward gradient is an unbiased estimate of the true gradient. Rescaling $\xi_k \to \xi_k/\lambda$ and $\alpha_k \to \alpha_k \lambda^2$ does not change the FGD updates. Therefore we do not gain anything by assuming that the covariance matrix of $\xi_k$ is a constant multiple of the identity matrix. 

The connection between the FGD estimator and zeroth-order optimization methods is not immediately obvious, as \eqref{eq: fgd} still contains the true gradient.
However, first-order Taylor approximation shows that for a continuously differentiable function $g\colon \R^p\to\R$ and $\theta\in\R^p,$
\[g(\theta+\xi)-g(\theta)\approx (\nabla g(\theta))^\top\xi. \]
Hence, up to an error term, FGD is equivalent to the zeroth-order optimization method which replaces the true gradient by the proxy
\[\big(g(\theta+\xi)-g(\theta)\big)\xi, \]
see e.g. \cite{liu20,conn09}. A connection between this zeroth-order method and Hebbian learning in BNNs (see Chapter 6 in \cite{trappenberg22} and \cite{hebb49}) has recently been shown in \cite{sh23}.

Interest in FGD is not only motivated by its connection to gradient-free optimization, but also by the fact that the forward gradient of an artificial neural network can be evaluated exactly by a single forward pass through the network via forward mode automatic differentiation (for more details see \cite{baydin2022,ren23}) and without computing the whole gradient. Backpropagation, on the contrary, relies on both forward and backward passes. 

The linear model assumes $p=d$ and $f_\theta(x)=\theta^\top x$ (see Section \ref{sec: model}). \cite{bos24} contains a first mathematical analysis of the mean squared error (MSE) of FGD in the linear model. It is shown, that for sample size $n,$ the MSE of the FGD estimator can be bounded by $d^2\log(d)/n.$ This contrasts the $d/n$ convergence rate for SGD, which is also known to be minimax optimal. This suboptimality is due to the fact that instead of the true $d$-dimensional gradient, the scalar random projection $(\nabla L)^\top\xi$ is employed in the update rule. In fact, \cite{sh23hebbian} shows that the rate $d^2/n$ is minimax optimal in the linear model with Gaussian design over the class of estimators that rely on querying each training sample at most twice. This is also in line with previous results on zeroth order optimization (see e.g. \cite{spok17,duchi15}).  

Observing different random projections for the same training sample provides more information for learning. Thus, a natural idea to improve FGD is to perform several updates per training sample. We introduce and analyze FGD($\ell$) that performs $\ell$ updates per training sample via the recursion
\begin{equation}\label{eq: fgd mult}
    \hat{\theta}^{\FGD}_{k,r}=\begin{cases}
      \hat{\theta}^{\FGD}_{k-1,\ell}-\alpha_k \bigg(\nabla_\theta\bigg(L\big(Y_k,f_{\hat{\theta}^{\FGD}_{k-1,\ell}}(X_k)\big)\bigg)\bigg)^\top \xi_{k,1}\xi_{k,1},\quad &r=1
      \\
      \hat{\theta}^{\FGD}_{k,r-1}-\alpha_k \bigg(\nabla_\theta\bigg(L\big(Y_k,f_{\hat{\theta}^{\FGD}_{k,r-1}}(X_k)\big)\bigg)\bigg)^\top \xi_{k,r}\xi_{k,r},\quad &r=2,\ldots, \ell,
    \end{cases}
    \ \ \text{for} \  k=1,2,\ldots,
\end{equation}
where the $\xi_{k,r}$ are again independent $p$-dimensional standard normal random variables. In particular, FGD($1$) coincides with FGD defined in \eqref{eq: fgd}.

FGD($\ell$) has already been conjectured to improve the performance of FGD in the original article \cite{baydin2022}, however its mathematical analysis is significantly more involved due to the stochastic dependence induced by the repeated sampling. Indeed for $r\geq2,$ the components of the forward gradient used in the update rule $\hat{\theta}^{\FGD}_{k,r-1},X_k,Y_k$ are not independent as in the original FGD algorithm \eqref{eq: fgd}. 

Implementation of FGD($\ell$) only requires biologically plausible forward passes and no backwards passes. From a biological point of view this corresponds to the idea that the brain uses the same data repeatedly for learning. In a stochastic process context repeated sampling has been investigated in \cite{christensen24}. A downside of repeated sampling is the increase in runtime in the number of repetitions per sample $\ell$.

In this paper we investigate the mean squared prediction error of FGD($\ell$) \eqref{eq: fgd mult} in the linear model. For $$\Sigma:=\E[X_1X_1^\top]$$ the second moment matrix of the iid covariate vectors, our main contributions are:
\begin{itemize}
    \item We show that FGD($\ell$) achieves the convergence rate \[\frac{d^2}{(\ell\land d)k}.\] For $\ell=d,$ the minimax rate $d/k$ is achieved. The convergence rate is derived by carefully tracking the dependencies in the update rule \eqref{eq: fgd mult}.
    \item We bound the mean squared prediction error (MSPE) instead of the MSE. This allows us to handle the case where the second moment matrix $\Sigma$ is degenerate. In particular, we show that if the covariate vectors are drawn from a distribution that is supported on an $s$-dimensional linear subspace, the convergence rate of FGD($s$) can be bounded by $s/n,$ with $s$ the rank of $\Sigma$.
%    \item We explicitly characterize the dependence of the MSPE on the number of repetitions per sample $\ell$, thus delivering decision rules for solving the trade-off between computational efficiency and statistical performance. In particular, it is shown that no more than $d$ queries should be generated per datum.
%    \item We replace the assumption of Gaussian design by assuming that the $(X_i)_{i\in[n]}$ are bounded, leading to a larger number of treatable models.
    \item Rate-optimal results for the MSE are obtained if $\Sigma$ is non-degenerate.
\end{itemize}
%Presented in brief, our main result is that rate-optimality can be achieved by optimization methods, which only require biologically plausible forward passes.
%\subsection{Structure}
The paper is structured as follows. The setting is formally introduced in Section \ref{sec: model}. Section \ref{sec: mspe} states and discusses the convergence rate for the MSPE of FGD($\ell$). The paper closes with a brief simulation study in Section \ref{sec: sim} and a conclusion (Section \ref{sec: conc}). 

\subsection{Notation}\label{subsec: not}
Let $d,d_1,d_2\in\N$. For a matrix $A\in\R^{d_1\times d_2},$ write $A^\top$ for its transpose. For a symmetric, positive semi-definite matrix $A\in\R^{d\times d},$ we denote its maximal (minimal) eigenvalue by $\lambda_{\max}(A)$ ($\lambda_{\min}(A)$), its smallest non-zero eigenvalue by $\lambda_{\min\neq 0}(A)$ and its condition number by $\kappa(A)=\lambda_{\max}(A)/\lambda_{\min}(A)$. The Loewner order for symmetric matrices $A_1,A_2\in\R^{d\times d}$ is denoted by $A_1\geq_{\mathsf{L}}A_2$ and means that the matrix $A_1-A_2$ is positive semi-definite.
Furthermore we define $[d]\coloneq\{1,\ldots,d\},$ denote by $\mathbb{I}_d$ the $d\times d$ identity matrix and write $\Vert A\Vert=\sqrt{\lambda_{\max}(A^\top A})$ for the spectral norm of a matrix $A\in\R^{d_1\times d_2}$. The expectation operator $\E$ refers to the expectation taken with respect to all randomness. Lastly, we denote the $d$-dimensional normal distribution with mean vector $\mu\in\R^d$ and covariance matrix $\Sigma\in\R^{d\times d}$ by $\mathcal{N}(\mu,\Sigma)$. By convention, the empty sum is $0$ and the empty product is $1$.
\section{Mathematical Model and Bias}\label{sec: model}
Throughout the paper we assume iid training data $(X_1,Y_1), (X_2,Y_2),\ldots \in\R^d\times \R$ following the linear model with true parameter $\theta_\star\in\R^d,$ that is,
\[ Y_i=X_i^\top\theta_\star+\ep_i,\quad i\in\N, \]
with $\ep_1,\ep_2,\ldots$ an iid sequence of random variables, independent of the covariate vectors $X_1, X_2,\ldots$ that satisfy $\E[\ep_1]=0$ and $\Var(\ep_1)=1$. We do not assume that the covariate vectors $X_i$ are centered, but will impose boundedness conditions on $\Vert X_i\Vert.$ Our interest lies in estimating the true parameter $\theta_\star,$ and bounding the mean squared prediction error (MSPE) of the estimator $\hat{\theta},$ which is given by
\[\operatorname{MSPE}(\hat{\theta})\coloneq \E[(X^\top(\theta_{\star}-\hat{\theta}))^2], \]
where $X$ is an independent copy of a training input. As the gradient of the function 
\[L\colon \R^d\to[0,\infty),\quad \theta\mapsto \frac 12 \big(u^\top(\theta_\star-\theta)\big)^2, \]
for $u\in\R^d,$ satisfies
\[\nabla L(\theta)=-(u^\top(\theta_\star-\theta))u, \]
the FGD($\ell$) updates are given by
\begin{align*}
\theta_{k,r}=\begin{cases}
\theta_{k-1,\ell}+\alpha_k(Y_k-X_k^\top\theta_{k-1,\ell})X_k^\top \xi_{k,r}\xi_{k,r},\quad &r=1,\\
\theta_{k,r-1}+\alpha_k(Y_k-X_k^\top\theta_{k,r-1})X_k^\top \xi_{k,r}\xi_{k,r},\quad &r\in[\ell]\backslash\{1\},\\
\end{cases}
\end{align*}
where $(\xi_{k,r})_{k\in\N,r\in[\ell]}$ is a collection of iid random vectors, such that $\xi_{1,1}\sim\cN(0,\mathbb{I}_d),$ and $\theta_{0,\ell}=\theta_0\in\R^d,$ is a (possibly random) initialization independent of everything else. A standard choice is to take $\theta_0$ to be equal to the zero vector in $\R^d$. Setting $\theta_{k,0}\coloneq\theta_{k-1,\ell},$ for $k\in\N$ allows us to write the FGD update rule as 
\begin{align*}
\theta_{k,r}=
\theta_{k,r-1}+\alpha_k(Y_k-X_k^\top\theta_{k,r-1})X_k^\top \xi_{k,r}\xi_{k,r},\quad r\in[\ell], \ k=1,2,\ldots
\end{align*}
If the noise vector $\xi_{k,r}$ happens to lie close to the linear space spanned by the gradient direction $X_k,$ that is, $\xi_{k,r} \approx \lambda X_k$ for some $\lambda \neq 0,$ then, $X_k^\top \xi_{k,r}\xi_{k,r} \approx \lambda^2 X_k^\top X_k X_k$ and the update is close to a SGD update with learning rate $\approx \alpha_k \lambda^2 X_k^\top X_k.$ If $X_k$ and $\xi_{k,r}$ are nearly orthogonal, then $X_k^\top \xi_{k,r} \approx 0$ and $\theta_{k,r}\approx 
\theta_{k,r-1}.$ Thus, little information about the gradient direction $X_k$ in the noise vector $\xi_{k,r}$ leads to a small FGD parameter update.

It seems natural to absorb the scalar product $X_k^\top \xi_{k,r}$ in the learning rate. Since the learning rate has to be positive, rescaling the learning rate by $\beta_k=\alpha_k  |X_k^\top \xi_{k,r}|/\|X_k\|$ yields the update formula 
\begin{align}
\theta_{k,r}=
\theta_{k,r-1}+\beta_k(Y_k-X_k^\top\theta_{k,r-1})\|X_k\| \sign(X_k^\top \xi_{k,r})\xi_{k,r},\quad r\in[\ell], \ k=1,2,\ldots
    \label{eq.aFGD}
\end{align}
with $\sign(u)=\mathbf{1}(u>0)-\mathbf{1}(u<0)$ the sign function. Since $\|X_k\|\E[\sign(X_k^\top \xi_{k,r})\xi_{k,r}\vert X_k]=\sqrt{2/\pi}X_k,$ also this scheme does, in expectation, gradient descent. Because of $X_k^\top \xi_{k,r}/\|X_k\| \, |X_k\sim \mathcal{N}(0,1),$ the learning rates $\beta_k$ and $\alpha_k$ are of the same order.

We refer to the update rule \eqref{eq.aFGD} as adjusted forward gradient descent or aFGD($\ell$). By construction, the aFGD updates are independent of $|X_k^\top \xi_{k,r}|.$ If $\xi_{k,r}\approx \lambda X_k,$ aFGD is approximately an SGD update with learning rate $\beta_k \|X_k\| \lambda.$ The theoretical guarantees (Appendix \ref{app: afgd}) and the simulations (Section \ref{sec: sim}) show that FGD($\ell$) and aFGD($\ell$) behave similarly.

\subsection{Bias}
To get a first impression of the behavior of FGD($\ell$) compared to FGD and SGD, we compute the bias of the estimator. The linear model assumption $Y_k=X_k^\top\theta_k+\ep_k,$  implies for $k\in\N,r\in[\ell],$
\begin{align*}
\theta_{k,r}-\theta_{\star}=\big(\mathbb{I}_d- \alpha_k \xi_{k,r}\xi_{k,r}^\top X_k X_k^\top\big) \big(\theta_{k,r-1}-\theta_{\star}\big)+ 
\alpha_k \ep_k X_k^\top \xi_{k,r}\xi_{k,r},\quad r\in[\ell].
\end{align*}
Hence, defining the product $\prod_{r=1}^\ell A_r$ as $A_\ell \cdot \ldots \cdot A_1,$ for matrices $A_1,\ldots,A_\ell$ of suitable dimensions, we can write 
\begin{align}
\begin{split}
\theta_{k,r}-\theta_{\star}= &\Big(\prod_{u=1}^r \big(\mathbb{I}_d- \alpha_k \xi_{k,u}\xi_{k,u}^\top X_k X_k^\top\big)\Big) \big(\theta_{k-1,\ell}-\theta_{\star}\big) \\
&+ 
\alpha_k \ep_k \sum_{u=1}^r \Big(\prod_{s=u+1}^r \big(\mathbb{I}_d- \alpha_k \xi_{k,s}\xi_{k,s}^\top X_k X_k^\top\big)\Big) X_k^\top \xi_{k,u}\xi_{k,u}.
\end{split}
\label{eq.iwck743}
\end{align}
The identity above decomposes the FGD($\ell$) updates into a gradient descent step perturbed by a stochastic error and resembles a vector autoregressive process of lag one. Such an autoregressive representation has also been shown to be central to analyze SGD with dropout regularization in the linear model \cite{clara24,li24}.
As $\ep_k$ and $\theta_{k-1,\ell}$ are independent of $X_k$ and $(\xi_{k,r})_{r\in[\ell]},$ the representation \eqref{eq.iwck743} directly implies the following result for the bias of FGD($\ell$).
\begin{theorem}\label{thm: bias}
For any positive integers $k,\ell,$
\begin{align*}
	\E[\theta_{k,\ell}]-\theta_\star
	&=\left(\prod_{i=1}^k\E\big[(\mathbb{I}_d-\alpha_iX_iX_i^\top)^{\ell}\big]\right)\big(\E\left[\theta_{0}\right]-\theta_\star\big).
\end{align*}
\end{theorem}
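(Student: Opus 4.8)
\emph{Proof strategy.} The plan is to insert the autoregressive identity~\eqref{eq.iwck743} (with $r=\ell$) into the expectation and iterate over $k$. Abbreviate
\begin{align*}
M_k&\coloneq\prod_{u=1}^\ell\big(\mathbb{I}_d-\alpha_k\xi_{k,u}\xi_{k,u}^\top X_kX_k^\top\big),\\
N_k&\coloneq\sum_{u=1}^\ell\Big(\prod_{s=u+1}^\ell\big(\mathbb{I}_d-\alpha_k\xi_{k,s}\xi_{k,s}^\top X_kX_k^\top\big)\Big)X_k^\top\xi_{k,u}\xi_{k,u},
\end{align*}
so that \eqref{eq.iwck743} becomes $\theta_{k,\ell}-\theta_\star=M_k(\theta_{k-1,\ell}-\theta_\star)+\alpha_k\ep_kN_k$; both $M_k$ and $N_k$ are measurable functions of $(X_k,\xi_{k,1},\dots,\xi_{k,\ell})$ only.

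First I would drop the noise term: $\ep_k$ is centered and independent of $(X_k,\xi_{k,1},\dots,\xi_{k,\ell})$, hence of $N_k$, so $\E[\alpha_k\ep_kN_k]=\alpha_k\E[\ep_k]\E[N_k]=0$. For the remaining term I would condition on the $\sigma$-field $\cF_{k-1}$ generated by the first $k-1$ triples $(X_i,\ep_i,\xi_{i,1},\dots,\xi_{i,\ell})$, $i\le k-1$: the iterate $\theta_{k-1,\ell}$ is $\cF_{k-1}$-measurable, while $(X_k,\xi_{k,1},\dots,\xi_{k,\ell})$, and thus $M_k$, is independent of $\cF_{k-1}$. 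Pulling out the $\cF_{k-1}$-measurable vector and using $\E[M_k\mid\cF_{k-1}]=\E[M_k]$,
\[\E\big[M_k(\theta_{k-1,\ell}-\theta_\star)\big]=\E\big[\E[M_k\mid\cF_{k-1}]\,(\theta_{k-1,\ell}-\theta_\star)\big]=\E[M_k]\,\E[\theta_{k-1,\ell}-\theta_\star].\]

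It then remains to identify $\E[M_k]$, which is the only step where the repeated sampling enters. Conditioning on $X_k$, the $\ell$ factors $\mathbb{I}_d-\alpha_k\xi_{k,u}\xi_{k,u}^\top X_kX_k^\top$, $u\in[\ell]$, are \emph{independent}, each being a function of the respective independent $\xi_{k,u}$, so the conditional expectation of their product equals the product of the conditional expectations. Since $\xi_{k,u}$ is independent of $X_k$ with $\E[\xi_{k,u}\xi_{k,u}^\top]=\mathbb{I}_d$, every factor contributes $\mathbb{I}_d-\alpha_kX_kX_k^\top$, whence $\E[M_k\mid X_k]=(\mathbb{I}_d-\alpha_kX_kX_k^\top)^{\ell}$ and $\E[M_k]=\E\big[(\mathbb{I}_d-\alpha_kX_kX_k^\top)^{\ell}\big]$.

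Combining the three identities above yields, for every $k\ge1$, the one-step recursion $\E[\theta_{k,\ell}-\theta_\star]=\E\big[(\mathbb{I}_d-\alpha_kX_kX_k^\top)^{\ell}\big]\,\E[\theta_{k-1,\ell}-\theta_\star]$. Unrolling this down to the initialization $\theta_{0,\ell}=\theta_0$, and recalling the product convention $\prod_{i=1}^kA_i=A_k\cdots A_1$ fixed just before~\eqref{eq.iwck743}, produces the claimed formula. I do not anticipate any real obstacle here: the bias is a short computation, and the identity $\E[M_k\mid X_k]=(\mathbb{I}_d-\alpha_kX_kX_k^\top)^{\ell}$ is clean precisely because conditioning on $X_k$ restores the independence of the $\ell$ noise-dependent factors. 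The genuine difficulty — controlling products of $\ell$ \emph{dependent} factors at the level of second moments, where no such conditioning decouples them — is deferred to the mean squared prediction error analysis.
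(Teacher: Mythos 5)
Your argument is correct and is essentially the paper's own (one-line) justification spelled out in full: the paper derives Theorem \ref{thm: bias} directly from the representation \eqref{eq.iwck743} together with the independence of $\ep_k$ and $\theta_{k-1,\ell}$ from $(X_k,\xi_{k,1},\dots,\xi_{k,\ell})$, which is precisely your conditioning on $\cF_{k-1}$ plus the factorization $\E[M_k\mid X_k]=(\mathbb{I}_d-\alpha_kX_kX_k^\top)^{\ell}$. The only cosmetic point is that $\cF_{k-1}$ should also contain $\sigma(\theta_0)$ so that $\theta_{k-1,\ell}$ is $\cF_{k-1}$-measurable when $k=1$; since $\theta_0$ is assumed independent of everything else, this changes nothing.
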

The case $\ell=1$ follows from Theorem 3.1 of \cite{bos24}. In this case the bias agrees with the bias of SGD in the linear model. Hence, due to the bias--variance decomposition, the suboptimal convergence rate of FGD is caused by the increased variance that is due to using a noisy estimate instead of the true gradient. Nevertheless for $\ell>1$, expanding the $\ell$-th power yields
\[(\mathbb{I}_d-\alpha_iX_iX_i^\top)^\ell = \mathbb{I}_d-\alpha_i\ell X_iX_i^\top +\cO(\alpha_i^2),\]
suggesting that the bias for repeated sampling FGD in Theorem \ref{thm: bias} is approximately
\[\left(\prod_{i=1}^k\E[\mathbb{I}_d-\alpha_i\ell X_iX_i^\top]\right)\big(\E\left[\theta_{0}\right]-\theta_\star\big).\]
Consequently, FGD($\ell$) with learning rate $\ell^{-1}\alpha_i$ achieves a similar bias as FGD with learning rate $\alpha_i$. As mentioned in the introduction, reducing the learning rate by a factor $\ell^{-1}$ corresponds to reducing the variance of the noise variables $\xi_{k,r}$ by a factor $\ell^{-1}$. Thus Theorem \ref{thm: bias} suggests that FGD($\ell$) has the same bias as FGD but reduces the variance. This is proved in the next section.
\section{Prediction error}\label{sec: mspe}
The following result derives nonasymptotic convergence guarantees for FGD($\ell$). The proof is deferred to Appendix \ref{subsec: proof}. Recall that $\Sigma:=\E[X_1X_1^\top]$ and $\lambda_{\min\neq 0}(\Sigma)$ denotes the smallest non-zero eigenvalue of $\Sigma.$
\begin{theorem}\label{thm: mspe}
	Let $k\in\N$ and assume $\Vert X_1\Vert^2\leq b$ for some $b>0$. If the learning rate is of the form
	\[\alpha_i=\frac{c_1}{\ell(c_1c_2+i)},\quad i=1,\ldots,k, \]
	for constants $c_1,c_2>0$ satisfying
	\[c_1\geq \frac{2}{\lambda_{\min\neq 0}(\Sigma)},\quad c_2\geq \frac{3b}{\lambda_{\min\neq 0}(\Sigma)}\bigg(2\lambda_{\max}(\Sigma)+\frac{\tr(\Sigma)}{\ell}\bigg),  \]
	then,
	\begin{align*}
		\operatorname{MSPE}(\theta_{k,\ell})
		\leq\left(\frac{1+c_1c_2}{k+1+c_1c_2}\right)^2\operatorname{MSPE}(\theta_{0}) 
		+16bc_1^2\bigg(\lambda_{\max}(\Sigma)
		+\frac{\tr(\Sigma)}{\ell}\bigg)\frac{k}{(k+1+c_1c_2)^2}.
	\end{align*}
\end{theorem}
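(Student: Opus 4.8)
The plan is to iterate a one‑step Loewner‑type estimate for the random transition matrices read off from the autoregressive representation \eqref{eq.iwck743}. Writing $\Delta_k:=\theta_{k,\ell}-\theta_\star$ and specialising \eqref{eq.iwck743} to $r=\ell$ gives
\[
\Delta_k=M_k\Delta_{k-1}+\alpha_k\ep_k N_k,\qquad M_k:=\prod_{u=1}^{\ell}\bigl(\mathbb{I}_d-\alpha_k\xi_{k,u}\xi_{k,u}^\top X_kX_k^\top\bigr),
\]
with $N_k:=\sum_{u=1}^{\ell}\bigl(\prod_{s=u+1}^{\ell}(\mathbb{I}_d-\alpha_k\xi_{k,s}\xi_{k,s}^\top X_kX_k^\top)\bigr)X_k^\top\xi_{k,u}\xi_{k,u}$. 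Since the fresh $X$ in the definition of the MSPE is independent of $\Delta_k$, one has $\operatorname{MSPE}(\theta_{k,\ell})=\E[\Delta_k^\top\Sigma\Delta_k]$. Now $(M_k,N_k)$ is a measurable function of $(X_k,\xi_{k,1},\dots,\xi_{k,\ell})$, hence independent of $\Delta_{k-1}$, while $\ep_k$ is centered with unit variance and independent of everything else; conditioning on $\Delta_{k-1}$ therefore kills the cross term and yields
\[
\operatorname{MSPE}(\theta_{k,\ell})=\E\bigl[\Delta_{k-1}^\top\,\E[M_k^\top\Sigma M_k]\,\Delta_{k-1}\bigr]+\alpha_k^2\,\E\bigl[N_k^\top\Sigma N_k\bigr].
\]
The theorem follows once we prove (i) the contraction $\E[\Delta_{k-1}^\top\E[M_k^\top\Sigma M_k]\Delta_{k-1}]\le(1-q_k)\operatorname{MSPE}(\theta_{k-1,\ell})$ with $q_k:=\ell\alpha_k\lambda_{\min\neq 0}(\Sigma)$ and (ii) the variance bound $\E[N_k^\top\Sigma N_k]\le C\,b\,\ell^2(\lambda_{\max}(\Sigma)+\tr(\Sigma)/\ell)$; then unrolling the recursion and inserting the learning rate finishes the proof by two elementary telescoping‑product estimates. (All quantities appearing are finite, being polynomials in Gaussians.)

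For (i), condition on $X_k$: the $\ell$ factors of $M_k$ are then i.i.d., and averaging them from the outside inwards gives $\E[M_k^\top\Sigma M_k\mid X_k]=\mathcal{T}_k^\ell(\Sigma)$, where, using the Gaussian fourth‑moment identity $\E[\xi\xi^\top S\xi\xi^\top]=2S+\tr(S)\mathbb{I}_d$ for symmetric $S$ and $\xi\sim\cN(0,\mathbb{I}_d)$,
\[
\mathcal{T}_k(S)=S-\alpha_k\bigl(X_kX_k^\top S+SX_kX_k^\top\bigr)+\alpha_k^2\bigl(2X_k^\top S X_k+\|X_k\|^2\tr(S)\bigr)X_kX_k^\top .
\]
The key structural fact is that $\mathcal{T}_k$ maps the span of $\{\Sigma,\;W_k,\;X_kX_k^\top\}$ into itself, where $W_k:=X_kX_k^\top\Sigma+\Sigma X_kX_k^\top$, so $\mathcal{T}_k^\ell(\Sigma)=\Sigma+b_\ell W_k+c_\ell X_kX_k^\top$ with scalars $b_\ell,c_\ell$ (functions of $X_k$) given by explicit recursions. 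Solving them yields $b_\ell=-(1-(1-\alpha_k\|X_k\|^2)^{\ell})/\|X_k\|^2$, whence $-\ell\alpha_k\le b_\ell\le 0$ and $0\le b_\ell+\ell\alpha_k\le\tfrac12\ell^2\alpha_k^2\|X_k\|^2$, while summing the $c_\ell$‑recursion gives $0\le c_\ell\le\ell^2\alpha_k^2\|X_k\|^2(2\lambda_{\max}(\Sigma)+\tr(\Sigma)/\ell)$; both use only $\alpha_k\|X_k\|^2\le\alpha_k b\le(6\ell)^{-1}$, guaranteed by the lower bound on $c_2$. Taking expectations over $X_k$, writing $b_\ell W_k=-\ell\alpha_k W_k+(b_\ell+\ell\alpha_k)W_k$ and using $\E[W_k]=2\Sigma^2$, the quadratic form $\E[\Delta^\top\E[M_k^\top\Sigma M_k]\Delta]$ (with $\Delta$ an independent copy of $\Delta_{k-1}$) is bounded termwise: the leading term contributes $-2\ell\alpha_k\E[\Delta^\top\Sigma^2\Delta]\le-2\ell\alpha_k\lambda_{\min\neq 0}(\Sigma)\E[\Delta^\top\Sigma\Delta]$ (as $\Sigma^2\geq_{\mathsf{L}}\lambda_{\min\neq 0}(\Sigma)\Sigma$); the sign‑indefinite remainder $2(b_\ell+\ell\alpha_k)(\Delta^\top X_k)(X_k^\top\Sigma\Delta)$ is controlled via $2|(\Delta^\top X_k)(X_k^\top\Sigma\Delta)|\le\lambda_{\max}(\Sigma)\Delta^\top X_kX_k^\top\Delta+\lambda_{\max}(\Sigma)^{-1}\Delta^\top\Sigma X_kX_k^\top\Sigma\Delta$ together with $\E[X_kX_k^\top]=\Sigma$ and $\Sigma^3\leq_{\mathsf{L}}\lambda_{\max}(\Sigma)^2\Sigma$, giving at most $\ell^2\alpha_k^2 b\lambda_{\max}(\Sigma)\E[\Delta^\top\Sigma\Delta]$; and the last term is at most $\E[c_\ell(\Delta^\top X_k)^2]\le\ell^2\alpha_k^2 b(2\lambda_{\max}(\Sigma)+\tr(\Sigma)/\ell)\E[\Delta^\top\Sigma\Delta]$. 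Collecting terms, $\E[\Delta^\top\E[M_k^\top\Sigma M_k]\Delta]\le(1-2\ell\alpha_k\lambda_{\min\neq 0}(\Sigma)+\ell^2\alpha_k^2 b(3\lambda_{\max}(\Sigma)+\tr(\Sigma)/\ell))\E[\Delta^\top\Sigma\Delta]$, and the lower bounds on $c_1,c_2$ are precisely what makes the $\alpha_k^2$ term at most half the first‑order term, yielding the claimed contraction with $q_k=\ell\alpha_k\lambda_{\min\neq 0}(\Sigma)$.

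For (ii) I would analyse the noise‑only process $\nu_r:=\alpha_k\ep_k\sum_{u=1}^{r}(\prod_{s=u+1}^{r}A_{k,s})X_k^\top\xi_{k,u}\xi_{k,u}$ with $A_{k,r}=\mathbb{I}_d-\alpha_k\xi_{k,r}\xi_{k,r}^\top X_kX_k^\top$, which satisfies $\nu_r=A_{k,r}\nu_{r-1}+\alpha_k\ep_k X_k^\top\xi_{k,r}\xi_{k,r}$, $\nu_0=0$, and for which $\alpha_k^2\E[N_k^\top\Sigma N_k]=\E[\nu_\ell^\top\Sigma\nu_\ell]$. Conditioning on $X_k$ and using the same fourth‑moment identity, $g_r:=\E[\nu_r^\top\Sigma\nu_r\mid X_k]$ and $h_r:=\E[(X_k^\top\nu_r)^2\mid X_k]$ obey coupled recursions; the essential trick is to close the $g$‑recursion through the Cauchy--Schwarz inequality $(\nu_{r-1}^\top\Sigma X_k)^2\le(\nu_{r-1}^\top\Sigma\nu_{r-1})(X_k^\top\Sigma X_k)$ for the positive semi‑definite form $\Sigma$, so that everything stays expressed through $g_{r-1},h_{r-1}$ and, crucially, $\E\|\nu_{r-1}\|^2$ never enters — the latter would carry a spurious factor $d$ through $\E[\|\xi\|^2(X_k^\top\xi)^2]=(d+2)\|X_k\|^2$, whereas the correct object $\xi^\top\Sigma\xi$ only produces $\tr(\Sigma)$. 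Using $h_r\le\cO(\ell^2\alpha_k^2 b^2)\le1$ and summing the $g$‑recursion with the crude bound $(1+\alpha_k\|X_k\|^2)^\ell\le e^{1/6}\le2$ gives $\E[\nu_\ell^\top\Sigma\nu_\ell]\le C\alpha_k^2 b\ell^2(\lambda_{\max}(\Sigma)+\tr(\Sigma)/\ell)$. Finally, iterating $\operatorname{MSPE}(\theta_{k,\ell})\le(1-q_k)\operatorname{MSPE}(\theta_{k-1,\ell})+\alpha_k^2\E[N_k^\top\Sigma N_k]$ and inserting $\alpha_i=c_1/(\ell(c_1c_2+i))$ — so that $q_i=c_1\lambda_{\min\neq 0}(\Sigma)/(c_1c_2+i)\ge2/(c_1c_2+i)$ by $c_1\ge2/\lambda_{\min\neq 0}(\Sigma)$ — the elementary inequality $1-\tfrac{2}{m}\le(\tfrac{m}{m+1})^2$ gives both $\prod_{i=1}^{k}(1-q_i)\le((1+c_1c_2)/(1+k+c_1c_2))^2$ and $\prod_{j=i+1}^{k}(1-q_j)\le((c_1c_2+i+1)/(c_1c_2+k+1))^2$, and summing the resulting series (using $c_1c_2\ge12$ to absorb the bounded factor $(c_1c_2+i+1)^2/(c_1c_2+i)^2$) produces the two stated terms. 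I expect the contraction estimate of part (i) to be the main obstacle, and it is exactly the dependence‑tracking promised in the abstract: one must first identify the correct conditional operator $\mathcal{T}_k$ and its invariant three‑dimensional subspace, and then deal with the fact that the first‑order matrix $W_k$ is sign‑indefinite, so the $\cO(\alpha_k^2)$ remainder has to be bounded against $\Sigma$ directly rather than through its spectral norm — only the exact numerology built into the learning rate keeps the rate free of any extra factor of the condition number of $\Sigma$ or of the ambient dimension $d$.
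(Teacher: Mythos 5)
Your argument is correct in strategy and, modulo routine constant-chasing, would deliver the stated bound; but it is organized quite differently from the paper's proof, so a comparison is in order. The paper works one \emph{inner} step at a time: Proposition \ref{prop: rec 1} gives a Loewner-order recursion for $\E[(\theta_{k,r}-\theta_\star)(\theta_{k,r}-\theta_\star)^\top\mid X_k]$ in $r$, which forces it to confront the correlation between $\ep_k$ and the current iterate at every inner step — that is the whole point of Lemma \ref{lemma: ep} — and it needs the auxiliary recursion of Lemma \ref{lemma: cond mse} for $\E[(X_k^\top(\theta_{k,r}-\theta_\star))^2\mid X_k]$ before composing the $\ell$ steps. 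You instead exploit that in the representation \eqref{eq.iwck743} the noise $\ep_k$ enters only as a scalar prefactor of the second term, so that at the level of a full block the cross term vanishes \emph{exactly} and $\operatorname{MSPE}(\theta_{k,\ell})$ splits into a signal part $\E[\Delta_{k-1}^\top\E[M_k^\top\Sigma M_k]\Delta_{k-1}]$ and a noise part $\alpha_k^2\E[N_k^\top\Sigma N_k]$; the signal part is then computed, not merely bounded, via the three-dimensional invariant subspace $\operatorname{span}\{\Sigma, X_kX_k^\top\Sigma+\Sigma X_kX_k^\top, X_kX_k^\top\}$ of the conditional transition operator $\mathcal{T}_k$, with the explicit solution $b_\ell=-(1-(1-\alpha_k\|X_k\|^2)^\ell)/\|X_k\|^2$ replacing the paper's telescoping and Taylor step. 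This buys a cleaner separation of concerns and a slightly sharper contraction constant ($3\lambda_{\max}(\Sigma)+\tr(\Sigma)/\ell$ versus the paper's $6\lambda_{\max}(\Sigma)+3\tr(\Sigma)/\ell$ in Proposition \ref{prop: rec mspe}), while the remaining ingredients (the Gaussian fourth-moment identity of Lemma \ref{lemma: cov}, the inequalities $\Sigma^2\geq_{\mathsf L}\lambda_{\min\neq 0}(\Sigma)\Sigma$ and $\Sigma^3\leq_{\mathsf L}\lambda_{\max}(\Sigma)^2\Sigma$, and the final unrolling with $1-2/(c+i)\leq((c+i-1)/(c+i))^2$) coincide with the paper's. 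Two caveats. First, the dependence problem has not disappeared, only been localized: your part (ii) still requires $\E[\tilde N_{r-1}\mid X_k]=\sum_{i=0}^{r-2}(1-\alpha_k\|X_k\|^2)^iX_k$ to control the cross terms in the $g_r$-recursion, which is precisely the computation of Lemma \ref{lemma: ep}, and part (ii) is by far the least detailed portion of your plan — the coupled $(g_r,h_r)$ recursions should be written out, since that is where the $\ell^2\lambda_{\max}(\Sigma)$ contribution (and hence the $d/(\ell\wedge d)$ phenomenon) actually arises. Second, your bound $c_\ell\leq\ell^2\alpha_k^2\|X_k\|^2(2\lambda_{\max}(\Sigma)+\tr(\Sigma)/\ell)$ appears to drop the contribution of the $b_j$-terms to the $c$-recursion, which adds another $O(\ell^2\alpha_k^2\|X_k\|^2\lambda_{\max}(\Sigma))$; this only perturbs absolute constants and is absorbed by the assumption on $c_2$, but it should be tracked if you want to reproduce the factor $16$ in the statement exactly.
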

A similar result holds for aFGD($\ell$), see Theorem \ref{thm: afgd mspe}. A consequence is that 
\[c_1=\frac{2}{\lambda_{\min\neq0}(\Sigma)},\quad c_2=\frac{4b}{\lambda_{\min\neq 0}(\Sigma)}\bigg(2\lambda_{\max}(\Sigma)+\frac{\tr(\Sigma)}{\ell}\bigg), \]
and $\ell\geq \tr(\Sigma)$ yield the  convergence rate
\begin{align}\label{eq: mspe remark}
	&\operatorname{MSPE}(\theta_{k,\ell})
\lesssim\frac{b^2}{k^2}\operatorname{MSPE}(\theta_{0})
	+\frac{b}{k}.
\end{align}
If $\Vert X_1\Vert_\infty\leq m$ almost surely, then one can choose $b=dm^2$ and as  $\operatorname{MSPE}(\theta_{0})$ is also of order $d,$ we thus recover the minimax rate $d/k$ whenever $k\gtrsim d^2$. In particular, if $\Sigma$ has full-rank, the same convergence rate for the mean squared error ($\operatorname{MSE}$) holds, since then $\lambda_{\min}(\Sigma)>0$ and thus
\[\frac{\operatorname{MSPE}(\theta_{k,\ell})}{\lambda_{\min}(\Sigma)}=\frac{\E[(\theta_{k,\ell}-\theta_{\star})^\top \Sigma(\theta_{k,\ell}-\theta_{\star})]}{\lambda_{\min}(\Sigma)}\geq \E[\Vert\theta_{k,\ell}-\theta_{\star}\Vert^2 ]= \operatorname{MSE}(\theta_{k,\ell}).\]

The prediction error also provides insights if $\Sigma$ is singular and thus $\lambda_{\min}(\Sigma)=0$. As example suppose the covariate vectors are contained in an $s$-dimensional hyperplane in the sense that
\[X_k=UZ_k,\quad k\in\N, \]
for some $U\in\R^{d\times s}, s\in[d]$ and $Z_k\in\R^s$ are iid random variables, such that $\Vert Z_1\Vert_\infty\leq m_Z$ for $m_Z>0$. In this case
\(\Vert X_1\Vert^2\leq \Vert U\Vert^2 \Vert Z_1\Vert^2\leq \Vert U\Vert^2 m_Z^2s, \) and thus \eqref{eq: mspe remark} yields
\begin{align*}
	&\operatorname{MSPE}(\theta_{k,\ell})
\lesssim\frac{ds^2}{k^2}
+\frac{s}{k}.
\end{align*}
For $k\gtrsim ds$ we obtain the rate $s/k,$ which can be much faster than $d/k$.

Lastly, we want to highlight the explicit dependence of the upper bound in Theorem \ref{thm: mspe} on the number $\ell$ of updates per training sample. This provides some insights on how to choose $\ell$ in practice. While increasing $\ell$ reduces the MSPE it also blows up the computational cost. Now Theorem \ref{thm: mspe} indicates that increasing $\ell$ beyond the so-called intrinsic dimension or effective rank $\tr(\Sigma)/\lambda_{\max}(\Sigma)$ \cite{kolt17,hdp} does not reduce the MSPE significantly. As the intrinsic dimension is obviously always smaller than the true dimension $d$, it is in particular never beneficial to repeat each sample more than $d$ times in forward gradient descent. As in the example above, the intrinsic dimension can be much smaller than the true dimension $d$ for distributions concentrated close to lower-dimensional subspaces (see Remark 5.6.3 in \cite{hdp}), implying that in these cases less updates are required for achieving rate-optimal statistical accuracy.

\section{Simulation Study}\label{sec: sim}
We study and compare the generalization properties of FGD($\ell$) on data generated from the linear model. The code is available on Github \cite{code}.

If not mentioned otherwise the inputs/covariate vectors $X_i$ will be generated according to a uniform distribution on $[-\sqrt{3},\sqrt{3}]^d$ such that $\Sigma=\mathbb{I}_d$. The true regression parameter $\theta_\star$ is drawn from a uniform distribution on $[-10,10]^d$ if not stated otherwise. Every experiment is repeated $10$ times and in the respective plots the average error is depicted. The shaded areas correspond to error $\pm$ standard deviation in regular scale plots and log(error) $\pm$ standard deviation/(error $\times \log(10)$) in log-log plots (see e.g. \cite{baird1995experimentation}).

\begin{figure}
    \centering
    \includegraphics[width=0.45\linewidth]{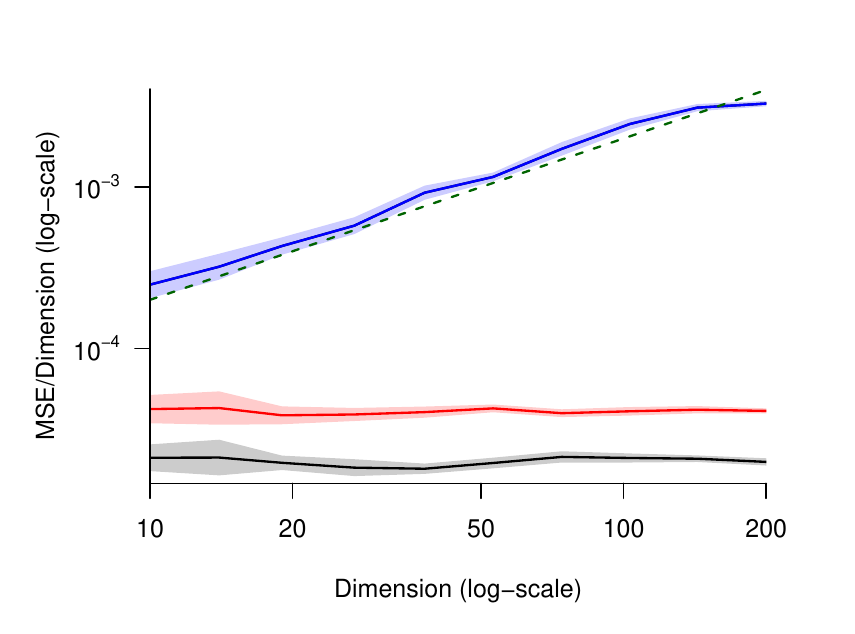}
     \includegraphics[width=0.53\linewidth]{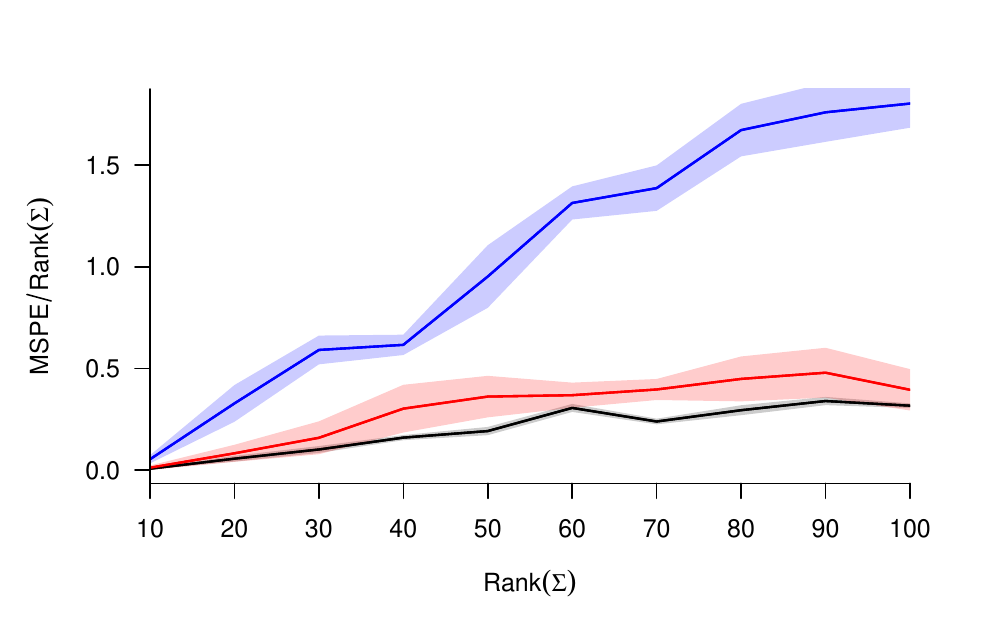}
    \caption{(Left) Log-log plot showing the dimension dependence of MSE/dimension based on $10$ repetitions of SGD (black), FGD (blue) and FGD($d$) (red). As reference, the function $d\mapsto d/n$ has been added to the plot (green, dashed). \\(Right) The functions rank($\Sigma$) $\mapsto$ MSPE/rank($\Sigma$) ($\pm$ one standard deviation) for SGD (black), FGD (blue) and FGD(rank($\Sigma$)) (red).}
    \label{fig: dimcomp}
\end{figure}

The derived theory highlights the quadratic dimension dependence of the MSE for FGD versus the linear dimension dependence of both FGD($d$) and SGD. To visualize the dimension dependence of the different methods, we choose sample size $n=5\times 10^4$ and dimensions $d$ on a logarithmic grid between $10$ and $200$. To avoid any dimension dependence of $\Vert \theta_\star\Vert,$ we first generate a draw from the uniform distribution on $[-10,10]^d$ and then take $\theta_\star$ as the $L^2$-normalization of this draw which ensures that $\Vert\theta_\star\Vert=1$. The results are summarized in Figure \ref{fig: dimcomp}.
\begin{figure}
    \centering
    \includegraphics[width=0.75\linewidth]
    {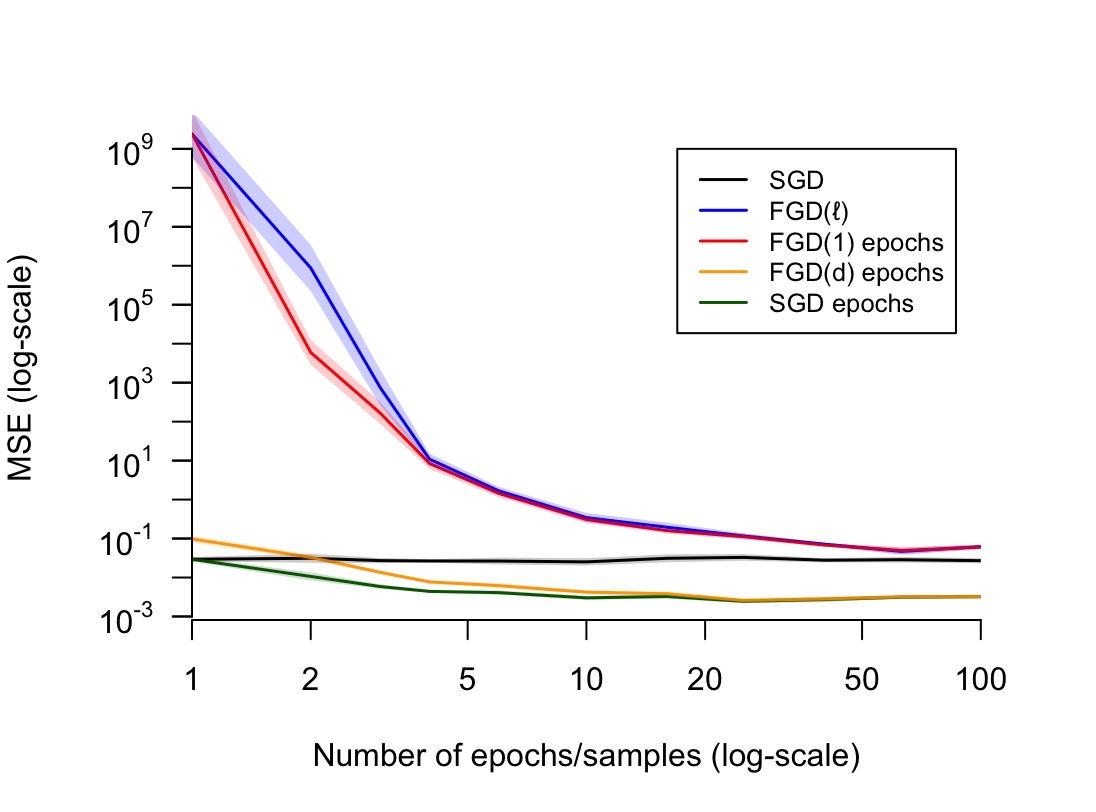}
    \\
    \includegraphics[width=0.75\linewidth]{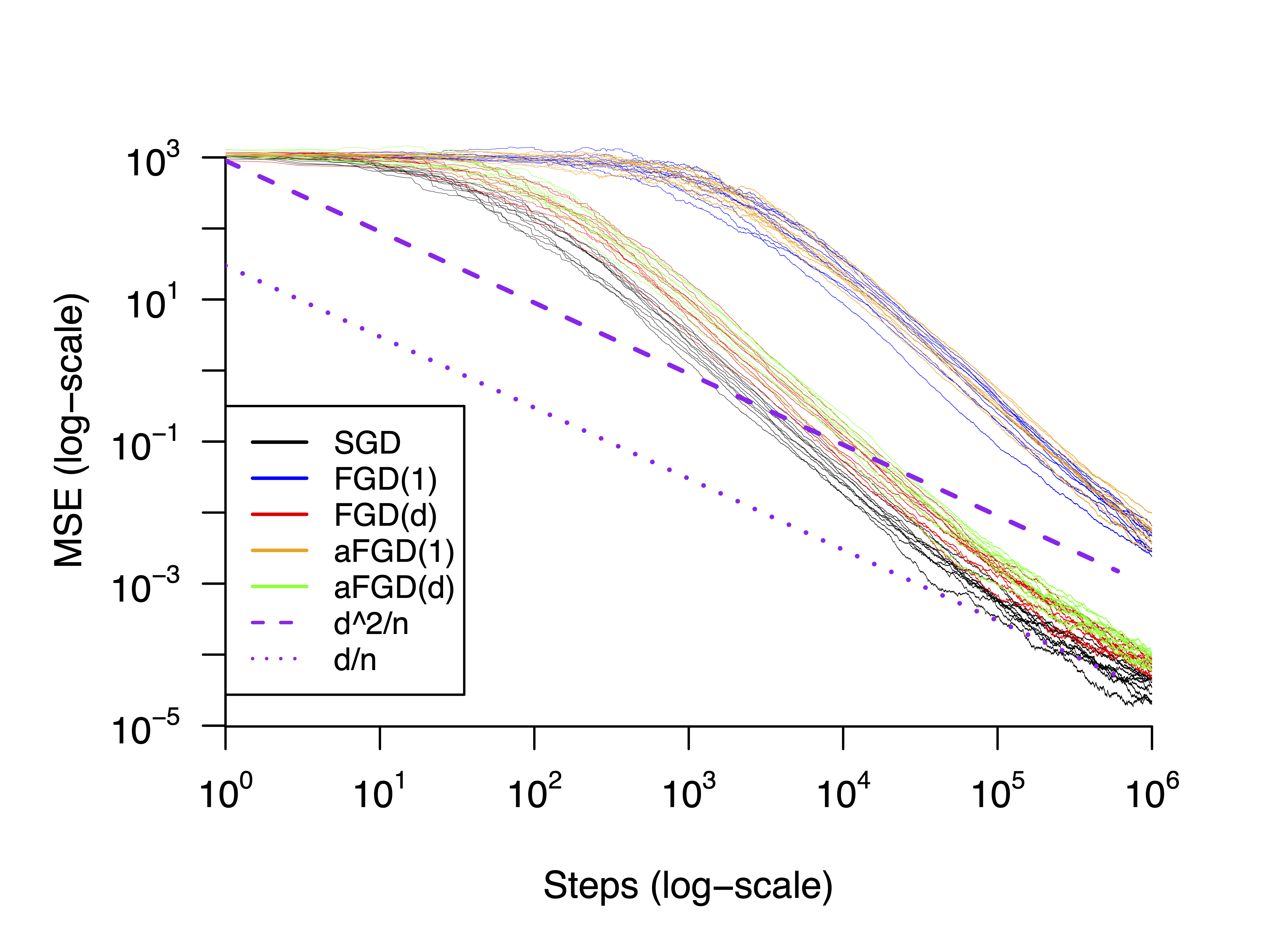}
    \caption{(Top) Log-log plot showing the dependence on $\ell$ of the averaged MSE of FGD($\ell$) (blue) as well as of FGD (red), FGD($d$) (orange) and SGD (green)  with $\ell$ epochs. The average is based on $10$ repetitions. As a benchmark also the MSE of SGD (black) with one epoch is displayed. \\(Bottom) Log-log plot showing the decay of the MSEs of SGD (black), FGD (blue), FGD($d$) (red), aFGD($1$) (orange) and aFGD($d$) (green) per step. A 'step' refers here to all the updates based on one training sample. Each line in the plot corresponds to one realization.  The lines $n\mapsto d^2/n$ (purple, dashed) and $n \mapsto d/n$ (purple/dotted) are added as a reference to allow for comparison with the derived convergence rates.}
    \label{fig: querycomp}
\end{figure}
In accordance with the derived theory, we observe that the ratio MSE/dimension stays constant for FGD($d$) and SGD, whereas the error increases linearly for FGD, matching the rate $d/n$.

SGD can often be improved by cycling several times through the dataset. Each cycle is referred to as an epoch. While similar to the repeated sampling in FGD($\ell$), this introduces a different dependence structure. Indeed, after one epoch, the SGD iterates depend on the whole sample, whereas the FGD($\ell$) iterates only depend on the already seen training samples. Due to differences in the dependence structure, the statistical analysis of SGD or FGD with multiple epochs requires distinct approaches, building on mathematical techniques developed for gradient descent with epochs \cite{chaud19,hazan14,sekhari2021sgd,yan20}, with diffusion approximation emerging as a particularly promising direction \cite{ankirchner22,mandt2015continuous}. However, since using SGD with multiple epochs is standard in applications, we compare both FGD($\ell$) and FGD($1$) with $\ell$ epochs, and FGD($d$) and SGD based on multiple epochs to standard SGD.
Additionally, we examine the trade-off between computational cost and generalization performance by studying the function $\ell \mapsto$ MSE(FGD($\ell$)). The results are summarized in Figure \ref{fig: querycomp}. Here we use sample size $n=10^5$, dimension $d=30$ and $\ell$ as well as the number of epochs chosen equidistantly on a logarithmic grid from $1$ to $100$. The MSE of FGD($\ell$) drops rapidly for small $\ell$ but slows down for $\ell\geq 30$ in accordance with our theoretical results. Similarly, the MSE of SGD and FGD($d$) decreases if up to $10$ epochs are used and remains essentially constant afterwards. SGD and FGD($d$) have moreover nearly the same MSE for larger number of epochs. The same is true for the FGD($\ell$) and FGD($1$) with $\ell$ epochs.

Another interesting aspect of Theorem \ref{thm: mspe} is the dependence of the prediction error on the rank of the second moment matrix $\Sigma$. For $d=100$ and sample size $n=3\times 10^4$, we compare the MSPE of SGD, FGD and FGD($s$) if the inputs/covariate vectors $(X_i)_{i\in\N}$ are of the form
\[X_i=UZ_i,\quad i=1,2,\ldots , \]
with $U\in\R^{d\times s},$ $s\in\{10,20,\ldots,100\},$ and $Z_1,Z_2,\ldots$ independently drawn from the uniform distribution on $[-\sqrt{3},\sqrt{3}]^s$. In accordance with Theorem \ref{thm: mspe}, we use $s$ updates per training sample in FGD. Figure \ref{fig: dimcomp} shows the function $s\mapsto $ MSPE/$s$. For FGD, we observe a near linear increase, whereas SGD and FGD($s$) behave very similarly and remain almost constant for $s>50.$ The linear increase of FGD and the stability of SGD and FGD($s$) agree with Theorem \ref{thm: mspe}. This also shows that if $\Sigma$ has rank $s$, FGD only requires $s$ instead of $d$ updates per sample to match the performance of SGD. We do not have an explanation for the slow linear growth of $s\mapsto$ MSPE/$s$ that can be observed for SGD and FGD($s$) if $s$ is small.

Lastly, we compare the MSEs of SGD, FGD, FGD($d$), aFGD($1$), and aFGD($d$) as the number of steps increases. Here a step refers to all the updates based on one training sample. While SGD, FGD and aFGD($1$) compute one update per training sample, FGD($d$) and aFGD($d$) compute $d$ updates per training sample. Choosing $d=30,$ we plot the MSEs for each step. The right panel of Figure \ref{fig: querycomp} displays the $5\times 10$ functions. Again, one can observe that SGD and FGD($d$) behave very similarly. After a nearly flat initialization phase of length approximately $d,$ both SGD and FGD($d$) start decreasing and after $\approx 10^5$ steps, the MSEs are near the $n\mapsto d/n$ line that indicates the minimax rate. FGD requires a longer initialization phase, and nearly matches the line $n\mapsto d^2/n$ for large $n$ as predicted in \cite{bos24}. For aFGD, aFGD($d$) we multiply the learning rates of FGD, FGD($d$) by $\sqrt{\pi/2}$ (see Section \ref{sec: model}). Then aFGD($1$) and FGD as well as aFGD($d$) and FGD($d$) behave almost identically.

\section{Conclusion}\label{sec: conc}

In this work, we have extended forward gradient descent \cite{baydin2022} to repeated sampling forward gradient descent FGD($\ell$). We showed that repeated sampling reduces the excessively high variance of FGD. In particular, we proved that FGD($d$) achieves the same minimax optimal convergence rate as stochastic gradient descent (SGD). The key mathematical challenge is to deal with the stochastic dependencies in the update rule that emerge through the repeated sampling. The main result Theorem \ref{thm: mspe} bounds the MSPE. It reveals the influence of the learning rate and the number of repetitions per sample and can be used to choose these quantities in practice. The convergence rate of the MSPE does not improve for more than $d$ repetitions per sample. Low-dimensional structure of the input distribution is ubiquitous in modern ML applications. We account for this by proving that if the inputs/covariates are supported on a lower dimensional linear subspace, faster convergence rates can be achieved and less repeated sampling in FGD($\ell$) is necessary to obtain the optimal convergence rates.

Going beyond the linear model is challenging. A natural extension are single-index models (see e.g. \cite{bietti22,hsu18}) with either polynomial \cite{wu24} or ReLU \cite{solta17} link function.

%Another relevant approach is given by incorporating epochs to our theory as these are standard in modern machine learning applications. This is especially interesting as generalizing the FGD($\ell$) algorithm by epochs still provides a biologically plausible optimization algorithm. 

Mimicking a school curriculum, in curriculum learning \cite{CL, soviany2022curriculum}, the order of the training data in the updates is not arbitrary, as for example in FGD($\ell$), but sorted according to a suitable definition of difficulty. Theory for gradient based curriculum learning in the linear model has been developed in \cite{wein20,weinshall18a,xu2022statistical}. We do believe that one can improve the performance of FGD($\ell$) by selecting the next training sample from the dataset based on the knowledge of the current iterate. The specific sampling strategy will also depend whether or not the random vectors $\xi_{k,r},$ $r=1,\ldots,\ell$ are released before choosing $(X_k,Y_k).$ In case they are known, one will pick a $X_k$ that is highly correlated with the noise and we assume that in this case even FGD without repeated sampling will achieve the optimal rate. If the selection of the training sample $(X_k,Y_k)$ is only allowed to depend on the previous iterate $\theta_{k-1,\ell}$, the FGD update rule $\theta_{k,r}=
\theta_{k,r-1}+\alpha_k(Y_k-X_k^\top\theta_{k,r-1})X_k^\top \xi_{k,r}\xi_{k,r}$ suggests to choose $(X_k,Y_k)$ to maximize $Y_k-X_k^\top\theta_{k-1,\ell}.$ Deriving theoretical guarantees for such schemes will be considerably more involved.

In conclusion, our findings suggest that performing many forward passes can replace the biologically implausible and memory-intensive backward pass in backpropagation without losing in terms of generalization guarantees.

\section{Acknowledgments}

This work is supported by ERC grant A2B (grant agreement no. 101124751). Part of this work has been carried out while the authors visited the Simons Institute for the Theory of Computing in Berkeley.
\appendix

\section{Proving Theorem \ref{thm: mspe}}\label{subsec: proof}
The main idea of the proof is to obtain recursive identities for the MSPE and other quantities conditioned on the currently used datum $X_k$, and subsequently using that $\theta_{k-1,\ell}$ and $X_k$ are independent. The following technical lemma extends Lemma 4.1 in \cite{bos24} to conditional expectations.
\begin{lemma}\label{lemma: cov}
Let $U$ be a $d$-dimensional random vector, $\Gamma\in\R^{d\times d}$ be  symmetric and positive definite, $Z\sim\mathcal{N}(0,\Gamma)$ and $\cF$ be a $\sigma$-algebra over $\R^d$. If $U$ is $\cF$-measurable and $Z$ is independent of $\cF$, then 
\[\E[(U^\top Z)^2 ZZ^\top\vert \cF ]=2\Gamma UU^\top\Gamma+U^\top\Gamma U\Gamma. \]
\end{lemma}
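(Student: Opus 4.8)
The plan is to reduce to a deterministic version of the identity and then carry out the standard Gaussian fourth-moment computation via Isserlis' (Wick's) formula.

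First I would use the independence of $Z$ and $\cF$ together with the $\cF$-measurability of $U$ to \emph{freeze} $U$: by the substitution rule for conditional expectations, $\E[(U^\top Z)^2 ZZ^\top \mid \cF] = \Phi(U)$ almost surely, where $\Phi(u) := \E[(u^\top Z)^2 ZZ^\top]$ for deterministic $u\in\R^d$ (each entry of $(u^\top Z)^2 ZZ^\top$ is a polynomial in a Gaussian vector, hence integrable). Thus it suffices to prove $\Phi(u) = 2\Gamma uu^\top\Gamma + u^\top\Gamma u\,\Gamma$ for every fixed $u$.

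Second, since $\Gamma$ is positive definite, write $Z = \Gamma^{1/2} W$ with $W \sim \mathcal{N}(0,\mathbb{I}_d)$ and set $v := \Gamma^{1/2} u$, so that $u^\top Z = v^\top W$ and $ZZ^\top = \Gamma^{1/2} WW^\top \Gamma^{1/2}$; hence $\Phi(u) = \Gamma^{1/2}\,\E[(v^\top W)^2 WW^\top]\,\Gamma^{1/2}$. The $(i,j)$ entry of $\E[(v^\top W)^2 WW^\top]$ equals $\sum_{a,b} v_a v_b\,\E[W_a W_b W_i W_j]$, and Isserlis' formula for the standard Gaussian gives $\E[W_a W_b W_i W_j] = \delta_{ab}\delta_{ij} + \delta_{ai}\delta_{bj} + \delta_{aj}\delta_{bi}$; summing yields $\|v\|^2\delta_{ij} + 2 v_i v_j$, i.e. $\E[(v^\top W)^2 WW^\top] = \|v\|^2\mathbb{I}_d + 2 vv^\top$. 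Conjugating by $\Gamma^{1/2}$ and substituting $\|v\|^2 = u^\top\Gamma u$ and $\Gamma^{1/2} vv^\top\Gamma^{1/2} = \Gamma uu^\top\Gamma$ gives the claim. (Alternatively one can apply Isserlis directly to $Z$ via $\E[Z_a Z_b Z_i Z_j] = \Gamma_{ab}\Gamma_{ij} + \Gamma_{ai}\Gamma_{bj} + \Gamma_{aj}\Gamma_{bi}$, or use Gaussian integration by parts / Stein's identity $\E[Z\,g(Z)] = \Gamma\,\E[\nabla g(Z)]$ with $g(Z) = (u^\top Z)^2 (w^\top Z)$ for arbitrary $w\in\R^d$, which reproduces $\Phi(u)w$ directly.)

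I do not expect a genuine obstacle here: the content is the moment computation, which is routine once the pairings of the four indices are bookkept correctly (there are exactly three). The only point needing a word of care is the first step — that conditioning on $\cF$ amounts to treating $U$ as a constant — which follows from independence plus measurability, together with the (harmless) integrability of the entries of $(U^\top Z)^2 ZZ^\top$ in the regime of interest where $\Vert U\Vert$ is bounded.
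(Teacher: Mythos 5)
Your proposal is correct and follows essentially the same route as the paper: both reduce the claim to the Gaussian fourth-moment identity via Isserlis' theorem, the paper applying it directly to $Z$ entrywise (your stated alternative) while you insert a cosmetic whitening step $Z=\Gamma^{1/2}W$ before doing the same index bookkeeping. Your explicit justification of freezing $U$ by independence and measurability is slightly more careful than the paper's one-line ``it follows by assumption,'' but the substance is identical.
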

\begin{proof}
For any $i,j\in[d]$,
\begin{align*}
((U^\top Z)^2 ZZ^\top)_{i,j}&=(U^\top Z)^2 Z_iZ_j
\\
&= Z_iZ_j \sum_{l,m=1}^d U_lU_mZ_mZ_l.
\end{align*}
Hence, it follows by assumption 
\begin{align*}
(\E[(U^\top Z)^2 ZZ^\top \vert \cF ])_{i,j}&= \sum_{l,m=1}^d U_lU_m\E[Z_mZ_lZ_iZ_j]
\\
&= \sum_{l,m=1}^d U_lU_m(\Gamma_{l,m}\Gamma_{i,j}+\Gamma_{l,i}\Gamma_{m,j}+\Gamma_{l,j}\Gamma_{m,i}),
\end{align*}
where we applied Isserlis' Theorem in the last step.
Arguing as in the proof of Lemma 4.1 in \cite{bos24}, this concludes the  proof.
\end{proof}

\begin{lemma} \label{lemma: ep}
	Let $k\in\N$ and $r=0,1,\ldots,\ell.$ If $\alpha_k X_k^\top X_k\leq \tfrac 34,$ then,
    \begin{align*}
    \E[\ep_k\theta_{k,r} \vert X_k] X_k^\top=\frac{1}{X_k^\top X_k}X_k^\top \E[\ep_k\theta_{k,r} \vert X_k]
    X_k X_k^\top 
    \leq_{\mathsf{L}} 2\alpha_k r X_k X_k^\top
    \end{align*}
    and
	\begin{align*}
		&0\leq X_k^\top \E[\ep_k\theta_{k,r} \vert X_k]\notag
		=1-\big(1-\alpha_k X_k^\top X_k\big)^r \leq \min\big(1, 2\alpha_kr X_k^\top X_k\big).
	\end{align*}
\end{lemma}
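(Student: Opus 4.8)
The plan is to exploit the vector autoregressive representation \eqref{eq.iwck743}, specialized to the model's update rule, and to use crucially that $\ep_k$ is centered and independent of $X_k$ and of $(\xi_{k,u})_{u\in[\ell]}$. First I would write, from the recursion $\theta_{k,r}=\theta_{k,r-1}+\alpha_k(Y_k-X_k^\top\theta_{k,r-1})X_k^\top\xi_{k,r}\xi_{k,r}$ together with $Y_k=X_k^\top\theta_\star+\ep_k$,
\[
\theta_{k,r}-\theta_\star=\big(\mathbb{I}_d-\alpha_k\xi_{k,r}\xi_{k,r}^\top X_kX_k^\top\big)(\theta_{k,r-1}-\theta_\star)+\alpha_k\ep_k X_k^\top\xi_{k,r}\xi_{k,r}.
\]
Multiplying by $\ep_k$ and taking $\E[\,\cdot\mid X_k,(\xi_{k,u})_{u\le r}]$, then $\E[\,\cdot\mid X_k]$, the term with $\ep_k(\theta_{k,r-1}-\theta_\star)$ contributes $\E[\ep_k\theta_{k,r-1}\mid X_k]$ minus $\theta_\star\E[\ep_k\mid X_k]=0$, while the stochastic-error term contributes $\alpha_k\E[\ep_k^2\mid X_k]\,\E[X_k^\top\xi_{k,r}\xi_{k,r}\mid X_k]=\alpha_k X_k$, using $\Var(\ep_1)=1$ and $\E[v^\top\xi\xi]=v$. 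This gives the scalar–vector recursion
\[
\E[\ep_k\theta_{k,r}\mid X_k]=\big(\mathbb{I}_d-\alpha_k X_kX_k^\top\big)\E[\ep_k\theta_{k,r-1}\mid X_k]+\alpha_k X_k,
\]
with initialization $\E[\ep_k\theta_{k,0}\mid X_k]=\E[\ep_k\theta_{k-1,\ell}\mid X_k]=0$, since $\theta_{k-1,\ell}$ depends only on data with index $<k$ and is independent of $\ep_k$.

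The next step is to observe that this recursion keeps everything inside the line spanned by $X_k$: since the right-hand side is a linear combination of $X_k$ and $X_kX_k^\top$ applied to the previous term, an easy induction shows $\E[\ep_k\theta_{k,r}\mid X_k]=c_r X_k$ for a scalar $c_r=c_r(X_k)\ge 0$, and the recursion collapses to $c_r=(1-\alpha_k X_k^\top X_k)c_{r-1}+\alpha_k$, $c_0=0$. Solving this elementary linear recursion gives $c_r=\alpha_k\sum_{j=0}^{r-1}(1-\alpha_k X_k^\top X_k)^j$, and summing the geometric series (valid since $\alpha_k X_k^\top X_k\le 3/4$ implies $0\le 1-\alpha_k X_k^\top X_k<1$, so the denominator $\alpha_k X_k^\top X_k$ is nonzero) yields
\[
c_r=\frac{1-(1-\alpha_k X_k^\top X_k)^r}{X_k^\top X_k},\qquad\text{hence}\qquad X_k^\top\E[\ep_k\theta_{k,r}\mid X_k]=X_k^\top X_k\,c_r=1-(1-\alpha_k X_k^\top X_k)^r.
\]
This is exactly the claimed identity for $X_k^\top\E[\ep_k\theta_{k,r}\mid X_k]$, and nonnegativity follows from $0\le 1-\alpha_k X_k^\top X_k\le 1$.

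For the two bounds, $1-(1-x)^r\le 1$ is immediate from $0\le 1-x\le1$; and $1-(1-x)^r\le rx$ for $x\in[0,1]$ is the standard Bernoulli-type inequality (prove by induction on $r$, or from convexity of $x\mapsto(1-x)^r$), which gives $1-(1-\alpha_k X_k^\top X_k)^r\le \alpha_k r X_k^\top X_k\le 2\alpha_k r X_k^\top X_k$; taking the minimum yields the stated bound. Finally, for the matrix identity, note $\E[\ep_k\theta_{k,r}\mid X_k]=c_r X_k$ gives $\E[\ep_k\theta_{k,r}\mid X_k]X_k^\top=c_r X_kX_k^\top=\frac{1}{X_k^\top X_k}X_k^\top\E[\ep_k\theta_{k,r}\mid X_k]\,X_kX_k^\top$ (both equal $c_r X_kX_k^\top$), and since $0\le c_r\le 2\alpha_k r$ by the previous display, $c_r X_kX_k^\top\le_{\mathsf L}2\alpha_k r X_kX_k^\top$ because $X_kX_k^\top$ is positive semidefinite. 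I expect no serious obstacle here; the only point requiring care is the bookkeeping of which $\sigma$-algebra to condition on at each stage so that $\ep_k$ and $\theta_{k-1,\ell}$ are genuinely independent (this is the place where $\theta_{k-1,\ell}=\theta_{k,0}$ being a function of earlier data only is used), and making sure the division by $X_k^\top X_k$ is legitimate, which is where the hypothesis $\alpha_k X_k^\top X_k\le 3/4$ (ensuring $X_k^\top X_k>0$ on the relevant event) enters.
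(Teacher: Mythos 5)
Your proposal is correct and rests on the same core computation as the paper: both reduce everything to the identity $\E[\ep_k\theta_{k,r}\mid X_k]=\alpha_k\sum_{i=0}^{r-1}(1-\alpha_kX_k^\top X_k)^iX_k$ and then sum the geometric series. You reach this identity by setting up and solving the one-step scalar recursion $c_r=(1-\alpha_kX_k^\top X_k)c_{r-1}+\alpha_k$ with $c_0=0$, whereas the paper substitutes the closed-form product representation \eqref{eq.iwck743} into the conditional expectation in one shot; these are the same argument in different packaging. Where you genuinely diverge is the final inequality: the paper deduces $1-(1-y)^r\leq 2ry$ from the sandwich $1+x\leq \e^x$ and $\e^{-x}\leq 1-x/2$ on $[0,3/2]$, which is precisely where the hypothesis $\alpha_kX_k^\top X_k\leq 3/4$ is consumed, while you invoke Bernoulli's inequality $(1-x)^r\geq 1-rx$, which is simpler, yields the sharper constant $1$ in place of $2$, and needs only $\alpha_kX_k^\top X_k\leq 1$. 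One small correction: your parenthetical claim that the hypothesis $\alpha_kX_k^\top X_k\leq 3/4$ ensures $X_k^\top X_k>0$ is a non sequitur, since the hypothesis is vacuously satisfied when $X_k=0$; the division by $X_k^\top X_k$ in the statement implicitly presupposes $X_k\neq 0$, and in the degenerate case $X_k=0$ all quantities vanish, so nothing is at stake.
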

% \alpha_k\sum_{i=0}^{r-1}\big(1-\alpha_kX_k^\top X_k\big)^iX_k.
\begin{proof}
If $r=0,$ we use that $\theta_{k,0}=\theta_{k-1,\ell}$ and thus $\ep_k$ and $\theta_{k,0}$ are independent. Thus, $X_k^\top\E[\ep_k\theta_{k,r} \vert X_k]=0,$ proving the case $r=0.$

For $r=1,\ldots, \ell,$ we deploy representation \eqref{eq.iwck743},
\begin{align*}
\begin{split}
\theta_{k,r}-\theta_{\star}= &\Big(\prod_{u=1}^r \big(\mathbb{I}_d- \alpha_k \xi_{k,u}\xi_{k,u}^\top X_k X_k^\top\big)\Big) \big(\theta_{k-1,\ell}-\theta_{\star}\big) \\
&+ 
\alpha_k \ep_k \sum_{u=1}^r \Big(\prod_{s=u+1}^r \big(\mathbb{I}_d- \alpha_k \xi_{k,s}\xi_{k,s}^\top X_k X_k^\top\big)\Big) X_k^\top \xi_{k,u}\xi_{k,u}.
\end{split}
\end{align*}
Using that $\ep_k$ and $\theta_{k-1,\ell}$ are mutually independent and independent of $X_k,$ $\E[\ep_k]=0,$ $\E[\ep_k^2]=1,$ $\E[\xi_{k,s}\xi_{k,s}^\top]=\mathbb{I}_d$ and $\E[X_k^\top \xi_{k,u}\xi_{k,u} \vert X_k]=X_k,$ we obtain
	\begin{align*}
		\E[\ep_k\theta_{k,r} \vert X_k]&=\E[\ep_k(\theta_{k,r}-\theta_{\star}) \vert X_k]=\alpha_k\sum_{u=1}^r (1-\alpha_kX_k^\top X_k)^{r-u} X_k =\alpha_k\sum_{i=0}^{r-1}(1-\alpha_kX_k^\top X_k)^iX_k.
	\end{align*}
From this identity, we can deduce the first assertion of the lemma. To prove the second statement, observe that
\begin{align*}
    0\leq X_k^\top \E[\ep_k\theta_{k,r} \vert X_k]
    = \alpha_k X_k^\top X_k\sum_{i=0}^{r-1}(1-\alpha_kX_k^\top X_k)^i 
    = 1-(1-\alpha_kX_k^\top X_k)^r\leq 1,
\end{align*}
using the assumption $\alpha_kX_k^\top X_k\leq 3/4$ for the last inequality.

It remains to prove 
\begin{align}
1-(1-\alpha_kX_k^\top X_k)^{r-1}\leq 2\alpha_k(\ell-1)X_k^\top X_k. \label{eq: 1}
\end{align}

Now the well-known inequality $1+x\leq \exp(x)$ that holds for all $x\in\R,$ together with
\[\exp(-x)\leq1-\frac{x}{2}, \quad x\in[0,\tfrac{3}{2}], \]
(see 4.2.37 in \cite{abra72}) and the assumption $r\leq \ell$ imply for $y\in [0,\tfrac 34],$ $1-(1-y)^{r-1}\leq 1-\exp(-2(r-1)y)\leq 2(r-1)y\leq 2(\ell-1)y.$ Thus, since $\alpha_kX_k^\top X_k\leq 3/4,$ setting $y:=\alpha_kX_k^\top X_k$ yields \eqref{eq: 1}, completing the proof.
\end{proof}

As the prediction error satisfies
\[\operatorname{MSPE}(\theta_{k,r})=\E[X^\top \E[(\theta_{k,r}-\theta_\star)(\theta_{k,r}-\theta_\star)^\top] X], \]
with $X$ an independent draw from the input distribution, it suffices to bound the expectation of the error matrix $(\theta_{k,r}-\theta_\star)(\theta_{k,r}-\theta_\star)^\top$ in the sense of Loewner, which we denote by $\leq_{\mathsf{L}}$ (recall Section \ref{subsec: not}). A first step is the following result. 
\begin{proposition}\label{prop: rec 1}
	For given $r=1,\ldots,\ell,$
\begin{align*}
	&\E[(\theta_{k,r}-\theta_{\star})(\theta_{k,r}-\theta_{\star})^\top \vert X_k]
	\\
	&\leq_{\mathsf L}
	(\mathbb{I}_d-\alpha_kX_kX_k^\top)\E[(\theta_{k,r-1}-\theta_{\star})(\theta_{k,r-1}-\theta_{\star})^\top \vert X_k](\mathbb{I}_d-\alpha_kX_kX_k^\top)
	+4\alpha_k^2(\ell-1) X_k X_k^\top
	\\
 &\quad+\alpha_k^2\Big(\E[(X_k^\top(\theta_{\star}-\theta_{k,r-1}))^2\vert X_k]+1\Big)\big(2X_kX_k^\top+X_k^\top X_k\mathbb{I}_d\big).
\end{align*}
\end{proposition}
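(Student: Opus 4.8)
The plan is to start from the single-step update $\theta_{k,r} = \theta_{k,r-1} + \alpha_k(Y_k - X_k^\top\theta_{k,r-1})X_k^\top\xi_{k,r}\xi_{k,r}$ and, using the linear model $Y_k = X_k^\top\theta_\star + \ep_k$, rewrite the error as
\[
\theta_{k,r} - \theta_\star = \big(\mathbb{I}_d - \alpha_k\xi_{k,r}\xi_{k,r}^\top X_kX_k^\top\big)(\theta_{k,r-1}-\theta_\star) + \alpha_k\ep_k X_k^\top\xi_{k,r}\xi_{k,r}.
\]
Forming the outer product $(\theta_{k,r}-\theta_\star)(\theta_{k,r}-\theta_\star)^\top$ and taking $\E[\,\cdot\mid X_k]$, the expansion splits into four groups of terms: (i) the ``main'' quadratic term involving $\E[(\mathbb{I}_d - \alpha_k\xi\xi^\top X_kX_k^\top)\,M\,(\mathbb{I}_d - \alpha_k X_kX_k^\top\xi\xi^\top)\mid X_k]$ with $M = \E[(\theta_{k,r-1}-\theta_\star)(\theta_{k,r-1}-\theta_\star)^\top\mid X_k]$ (here I use the independence of $\xi_{k,r}$ from $\theta_{k,r-1}, X_k, \ep_k$); (ii) the pure noise term $\alpha_k^2\E[\ep_k^2]\E[(X_k^\top\xi)^2\xi\xi^\top\mid X_k]$; (iii) and (iv) the two symmetric cross terms between the autoregressive part and the noise part, which involve $\E[\ep_k\theta_{k,r-1}\mid X_k]$. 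I will treat each group in turn.

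For group (ii) I apply Lemma \ref{lemma: cov} with $U = X_k$, $\Gamma = \mathbb{I}_d$ (conditionally on $X_k$), giving exactly $\alpha_k^2(2X_kX_k^\top + X_k^\top X_k\,\mathbb{I}_d)$, which is part of the last line of the claim. For group (i), expanding the product yields $M$, two linear-in-$\alpha_k$ cross terms, and a quadratic-in-$\alpha_k$ term $\alpha_k^2\E[\xi\xi^\top X_kX_k^\top M X_kX_k^\top\xi\xi^\top\mid X_k]$; the two linear terms combine with the leading $M$ into $(\mathbb{I}_d - \alpha_k X_kX_k^\top)M(\mathbb{I}_d - \alpha_k X_kX_k^\top)$ up to the quadratic correction, since $\E[\xi\xi^\top\mid X_k] = \mathbb{I}_d$. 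The quadratic correction term is handled again by Lemma \ref{lemma: cov}: writing $X_kX_k^\top M X_kX_k^\top$ as $(X_k^\top M X_k)X_kX_k^\top$ and noting $X_k^\top M X_k = \E[(X_k^\top(\theta_{k,r-1}-\theta_\star))^2\mid X_k]$, the conditional expectation equals $\alpha_k^2\E[(X_k^\top(\theta_\star-\theta_{k,r-1}))^2\mid X_k](2X_kX_k^\top + X_k^\top X_k\,\mathbb{I}_d)$ — this supplies the remaining piece of the last line, after absorbing the $+1$ from group (ii). For groups (iii)–(iv), I use $\E[\xi X_k^\top\xi\mid X_k] = X_k$ and $\E[\xi\xi^\top X_kX_k^\top X_k^\top\xi\xi^\top\mid X_k]$-type identities to reduce the cross terms to expressions in $X_k^\top\E[\ep_k\theta_{k,r-1}\mid X_k]$ and $\E[\ep_k\theta_{k,r-1}\mid X_k]X_k^\top$; Lemma \ref{lemma: ep} then bounds these in the Loewner order by multiples of $\alpha_k(\ell-1)X_kX_k^\top$, producing the $4\alpha_k^2(\ell-1)X_kX_k^\top$ term. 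Here I will also need the hypothesis $\alpha_k X_k^\top X_k \le 3/4$ (implied by the learning-rate choice and $\|X_1\|^2 \le b$) to invoke Lemma \ref{lemma: ep}.

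The main obstacle I anticipate is bookkeeping discipline in group (i): one must carefully verify that dropping the true (equal-sign) quadratic correction $\alpha_k^2 X_kX_k^\top M X_kX_k^\top$ in favor of the bound $\alpha_k^2(X_k^\top M X_k)(2X_kX_k^\top + X_k^\top X_k\mathbb{I}_d)$ is valid in the Loewner order, i.e.\ that $\E[\xi\xi^\top X_kX_k^\top M X_kX_k^\top\xi\xi^\top\mid X_k] - X_kX_k^\top M X_kX_k^\top \ge_{\mathsf L} 0$ so the replacement only inflates the bound; combined with the fact that the two linear cross terms of group (i) exactly reconstitute $(\mathbb{I}_d - \alpha_kX_kX_k^\top)M(\mathbb{I}_d - \alpha_kX_kX_k^\top)$ minus $\alpha_k^2 X_kX_k^\top M X_kX_k^\top$, this is what makes the first line of the claim come out cleanly. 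The cross-term analysis in (iii)–(iv) is the second delicate point, since the sign of $\E[\ep_k\theta_{k,r-1}\mid X_k]$ is not fixed; the symmetrization $a b^\top + b a^\top \le_{\mathsf L} \eta\, a a^\top + \eta^{-1} b b^\top$ together with the Lemma \ref{lemma: ep} bound $X_k^\top\E[\ep_k\theta_{k,r-1}\mid X_k] \le 2\alpha_k(\ell-1)X_k^\top X_k$ will be the key device to extract a clean $X_kX_k^\top$ term with the stated constant.
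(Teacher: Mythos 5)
Your proposal is correct and follows essentially the same route as the paper's proof: both reduce the one-step second moment to the Gaussian fourth-moment identity of Lemma \ref{lemma: cov}, control the $\ep_k$--$\theta_{k,r-1}$ correlation via Lemma \ref{lemma: ep}, and discard the positive semi-definite term $\alpha_k^2 X_kX_k^\top M X_kX_k^\top$ to obtain the clean $(\mathbb{I}_d-\alpha_kX_kX_k^\top)M(\mathbb{I}_d-\alpha_kX_kX_k^\top)$ factor. The only (cosmetic) difference is that you split $Y_k-X_k^\top\theta_{k,r-1}$ into signal and noise before squaring rather than after, and note that your final symmetrization device is not needed since Lemma \ref{lemma: ep} already shows $\E[\ep_k\theta_{k,r-1}\vert X_k]$ is a nonnegative multiple of $X_k$, which directly yields the $4\alpha_k^2(\ell-1)X_kX_k^\top$ term.
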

\begin{proof}
By definition and Lemma \ref{lemma: cov},
\begin{align*}
&\E[(\theta_{k,r}-\theta_{\star})(\theta_{k,r}-\theta_{\star})^\top \vert X_k]
\\
&=\E[(\theta_{k,r-1}-\theta_{\star})(\theta_{k,r-1}-\theta_{\star})^\top \vert X_k]
\\&
\quad+\alpha_k\E[((Y_k-X_k^\top\theta_{k,r-1})X_k^\top \xi_{k,r}\xi_{k,r})(\theta_{k,r-1}-\theta_{\star})^\top \vert X_k]
\\
&\quad+\alpha_k\E[(\theta_{k,r-1}-\theta_{\star})((Y_k-X_k^\top\theta_{k,r-1})X_k^\top \xi_{k,r}\xi_{k,r})^\top \vert X_k]
\\
&\quad+\alpha_k^2\E[((Y_k-X_k^\top\theta_{k,r-1})X_k^\top \xi_{k,r})^2\xi_{k,r}\xi_{k,r}^\top \vert X_k]
\\
&=\E[(\theta_{k,r-1}-\theta_{\star})(\theta_{k,r-1}-\theta_{\star})^\top \vert X_k]
\\&
\quad+\alpha_k\E[((Y_k-X_k^\top\theta_{k,r-1})X_k)(\theta_{k,r-1}-\theta_{\star})^\top \vert X_k]
\\
&\quad+\alpha_k\E[(\theta_{k,r-1}-\theta_{\star})((Y_k-X_k^\top\theta_{k,r-1})X_k)^\top \vert X_k]
\\
&\quad+\alpha_k^2\E[(Y_k-X_k^\top\theta_{k,r-1})^2\vert X_k](2X_kX_k^\top+X_k^\top X_k\mathbb{I}_d) .
\end{align*}
Since $X_k^\top(\theta_{\star}-\theta_{k,r-1})$ is a real number, we have $X_k^\top(\theta_{\star}-\theta_{k,r-1})X_k(\theta_{k,r-1}-\theta_{\star})^\top= X_k X_k^\top(\theta_{\star}-\theta_{k,r-1})(\theta_{k,r-1}-\theta_{\star})^\top.$ Using moreover that $\ep_k$ and $X_k$ are independent and Lemma \ref{lemma: ep} for the inequality in the last line, yields
\begin{align*}
&\E[((Y_k-X_k^\top\theta_{k,r-1})X_k)(\theta_{k,r-1}-\theta_{\star})^\top \vert X_k]
+\E[(\theta_{k,r-1}-\theta_{\star})((Y_k-X_k^\top\theta_{k,r-1})X_k \vert X_k]
	\\
	&=\E[X_k^\top(\theta_{\star}-\theta_{k,r-1})X_k(\theta_{k,r-1}-\theta_{\star})^\top \vert X_k]+\E[(\theta_{k,r-1}-\theta_{\star})(X_k^\top(\theta_{\star}-\theta_{k,r-1})X_k)^\top \vert X_k]
	\\&\quad+\E[\ep_kX_k(\theta_{k,r-1}-\theta_{\star})^\top \vert X_k]+\E[(\theta_{k,r-1}-\theta_{\star})\ep_kX_k^\top \vert X_k]
		\\
	&=-X_kX_k^\top\E[(\theta_{k,r-1}-\theta_{\star})(\theta_{k,r-1}-\theta_{\star})^\top \vert X_k]-\E[(\theta_{k,r-1}-\theta_{\star})(\theta_{k,r-1}-\theta_{\star})^\top \vert X_k]X_kX_k^\top
	\\&\quad+X_k\E[\ep_k\theta_{k,r-1}^\top \vert X_k]+\E[\ep_k \theta_{k,r-1}\vert X_k]X_k^\top \\
	&\leq_{\mathsf{L}} -X_kX_k^\top\E[(\theta_{k,r-1}-\theta_{\star})(\theta_{k,r-1}-\theta_{\star})^\top \vert X_k]-\E[(\theta_{k,r-1}-\theta_{\star})(\theta_{k,r-1}-\theta_{\star})^\top \vert X_k]X_kX_k^\top
	\\&\quad+4\alpha_k (\ell-1) X_k X_k^\top.
\end{align*}
Arguing similarly and using Lemma \ref{lemma: ep} for the last inequality,
\begin{align*}
&\E[(Y_k-X_k^\top\theta_{k,r-1})^2\vert X_k]
\\
&=\E[(X_k^\top(\theta_{\star}-\theta_{k,r-1}))^2\vert X_k]+\E[\ep_k^2\vert X_k]
+2\E[\ep_kX_k^\top(\theta_{\star}-\theta_{k,r-1})\vert X_k]
\\&=\E[(X_k^\top(\theta_{\star}-\theta_{k,r-1}))^2\vert X_k]+1
-2X_k^\top\E[\ep_k\theta_{k,r-1}\vert X_k]\\
&\leq \E[(X_k^\top(\theta_{\star}-\theta_{k,r-1}))^2\vert X_k]+1.
\end{align*}
By combining the previous results, we obtain
\begin{align*}
	&\E[(\theta_{k,r}-\theta_{\star})(\theta_{k,r}-\theta_{\star})^\top \vert X_k]
\\
	&\leq_{\mathsf{L}}\E[(\theta_{k,r-1}-\theta_{\star})(\theta_{k,r-1}-\theta_{\star})^\top \vert X_k]
	\\&
	\quad-\alpha_k\big(X_kX_k^\top\E[(\theta_{k,r-1}-\theta_{\star})(\theta_{k,r-1}-\theta_{\star})^\top \vert X_k]+\E[(\theta_{k,r-1}-\theta_{\star})(\theta_{k,r-1}-\theta_{\star})^\top \vert X_k]X_kX_k^\top\big)
	\\&\quad+4\alpha_k^2 (\ell-1) X_k X_k^\top
	\\
	&\quad+\alpha_k^2(\E[(X_k^\top(\theta_{\star}-\theta_{k,r-1}))^2\vert X_k]+1
	)(2X_kX_k^\top+X_k^\top X_k\mathbb{I}_d) 
	\\
	&=(\mathbb{I}_d-\alpha_kX_kX_k^\top)\E[(\theta_{k,r-1}-\theta_{\star})(\theta_{k,r-1}-\theta_{\star})^\top \vert X_k](\mathbb{I}_d-\alpha_kX_kX_k^\top)
	\\&\quad -\alpha_k^2X_kX_k^\top\E[(\theta_{k,r-1}-\theta_{\star})(\theta_{k,r-1}-\theta_{\star})^\top \vert X_k]X_kX_k^\top
	\\&\quad+4\alpha_k^2 (\ell-1) X_k X_k^\top
	\\
	&\quad+\alpha_k^2(\E[(X_k^\top(\theta_{\star}-\theta_{k,r-1}))^2\vert X_k]+1
	)(2X_kX_k^\top+X_k^\top X_k\mathbb{I}_d) 
	\\&\leq_{\mathsf L}
	(\mathbb{I}_d-\alpha_kX_kX_k^\top)\E[(\theta_{k,r-1}-\theta_{\star})(\theta_{k,r-1}-\theta_{\star})^\top \vert X_k](\mathbb{I}_d-\alpha_kX_kX_k^\top)
	\\&\quad+4\alpha_k^2 (\ell-1) X_k X_k^\top
	\\
	&\quad+\alpha_k^2(\E[(X_k^\top(\theta_{\star}-\theta_{k,r-1}))^2\vert X_k]+1
	)(2X_kX_k^\top+X_k^\top X_k\mathbb{I}_d),
\end{align*}
completing the proof.
\end{proof}
%In order to employ Proposition \ref{prop: rec 1} for further investigations, we are firstly in need of computing the conditional covariance of $\theta_{k,r}$ and $\ep_k$ given $X_k$. As it turns out, this is given as $\alpha_kX_k$ weighted by a geometric sum.

Additionally to the previous lemma, Proposition \ref{prop: rec 1} also requires us to bound the MSPE of $\theta_{k,r}$ but not weighted by the independent datum $X,$ but by the used training datum $X_k$. For doing so, Proposition \ref{prop: rec 1} itself will prove to be useful. 
\begin{lemma}\label{lemma: cond mse}
 Let $k\in\N$ and $r=1,\ldots,\ell.$ If $\alpha_k\leq (4 X_k^\top X_k)^{-1}$ it holds \begin{align*}
	&\E[(X_k^\top(\theta_{k,r}-\theta_{\star}))^2 \vert X_k]
    \leq (1-\alpha_kX_k^\top X_k)^r\E[(X_k^\top(\theta_{k-1,\ell}-\theta_{\star}))^2 \vert X_k]
	+4r\ell\alpha_k^2 (X_k^\top X_k)^2.
\end{align*}
\end{lemma}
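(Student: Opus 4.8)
The plan is to deduce the scalar recursion for the conditional prediction error from the Loewner recursion of Proposition \ref{prop: rec 1} by contracting with the vector $X_k$. Write $M_{k,r}:=\E[(\theta_{k,r}-\theta_{\star})(\theta_{k,r}-\theta_{\star})^\top \vert X_k]$ and set $q_r:=X_k^\top M_{k,r}X_k=\E[(X_k^\top(\theta_{k,r}-\theta_\star))^2\vert X_k]$, so that the asserted bound is a bound on $q_r$ in terms of $q_0=\E[(X_k^\top(\theta_{k-1,\ell}-\theta_\star))^2\vert X_k]$, using $\theta_{k,0}=\theta_{k-1,\ell}$. Since $\alpha_k\leq(4X_k^\top X_k)^{-1}$ forces $\alpha_kX_k^\top X_k\leq 3/4$, Proposition \ref{prop: rec 1} applies. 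Multiplying its Loewner inequality on the left by $X_k^\top$ and on the right by $X_k$ preserves the inequality (if $A\leq_{\mathsf L}B$ then $X_k^\top AX_k\leq X_k^\top BX_k$), and using $X_k^\top(\mathbb{I}_d-\alpha_kX_kX_k^\top)=(1-\alpha_kX_k^\top X_k)X_k^\top$ together with $X_k^\top X_kX_k^\top X_k=(X_k^\top X_k)^2$ one obtains
\[
q_r\leq (1-\alpha_kX_k^\top X_k)^2 q_{r-1}+4\alpha_k^2(\ell-1)(X_k^\top X_k)^2+3\alpha_k^2\big(q_{r-1}+1\big)(X_k^\top X_k)^2.
\]

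Next I would collect the $q_{r-1}$ terms: their coefficient is $(1-\alpha_kX_k^\top X_k)^2+3\alpha_k^2(X_k^\top X_k)^2=1-2\alpha_kX_k^\top X_k+4\alpha_k^2(X_k^\top X_k)^2$. The key observation — and the one place the hypothesis $\alpha_k\leq(4X_k^\top X_k)^{-1}$ is genuinely used — is that $4\alpha_kX_k^\top X_k\leq 1$ implies $4\alpha_k^2(X_k^\top X_k)^2\leq \alpha_kX_k^\top X_k$, so this coefficient is at most $\gamma:=1-\alpha_kX_k^\top X_k\in[\tfrac14,1)$. Bounding the remaining constant contribution by $4\alpha_k^2(\ell-1)(X_k^\top X_k)^2+3\alpha_k^2(X_k^\top X_k)^2=(4\ell-1)\alpha_k^2(X_k^\top X_k)^2\leq 4\ell\alpha_k^2(X_k^\top X_k)^2$ yields the clean scalar recursion
\[
q_r\leq \gamma\, q_{r-1}+4\ell\alpha_k^2(X_k^\top X_k)^2,\qquad r=1,\ldots,\ell.
\]

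Finally, unrolling this recursion from $r$ down to $0$ gives $q_r\leq \gamma^r q_0+4\ell\alpha_k^2(X_k^\top X_k)^2\sum_{j=0}^{r-1}\gamma^j$, and since $0\leq\gamma<1$ we have $\sum_{j=0}^{r-1}\gamma^j\leq r$, which is exactly the claimed bound $q_r\leq (1-\alpha_kX_k^\top X_k)^r\,\E[(X_k^\top(\theta_{k-1,\ell}-\theta_\star))^2\vert X_k]+4r\ell\alpha_k^2(X_k^\top X_k)^2$. There is no substantive obstacle beyond bookkeeping; the only steps requiring care are (i) checking that the hypothesis on $\alpha_k$ is precisely what pushes the contraction factor down to $1-\alpha_kX_k^\top X_k$ (this is what makes the later estimates in Theorem \ref{thm: mspe} close), and (ii) remembering to absorb the $q_{r-1}$-proportional term coming from the $(q_{r-1}+1)$ factor in Proposition \ref{prop: rec 1} into the contraction coefficient rather than treating it as part of the additive constant.
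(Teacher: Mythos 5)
Your argument is correct and follows the paper's proof essentially verbatim: contract Proposition \ref{prop: rec 1} with $X_k$, absorb the $3\alpha_k^2(X_k^\top X_k)^2 q_{r-1}$ term into the contraction factor via $4\alpha_k^2(X_k^\top X_k)^2\leq\alpha_kX_k^\top X_k$, and unroll the resulting one-step recursion, bounding the geometric sum by $r$. The only slip is cosmetic: $\gamma=1-\alpha_kX_k^\top X_k$ lies in $[\tfrac34,1]$, not $[\tfrac14,1)$, which changes nothing.
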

\begin{proof}
Applying Proposition \ref{prop: rec 1} and using $X_k^\top(\mathbb{I}_d-\alpha_kX_kX_k^\top)=(1-\alpha_kX_k^\top X_k)X_k^\top,$ $(1-\alpha_kX_k^\top X_k)\in \mathbb{R},$ and $\alpha_kX_k^\top X_k\leq 1/4,$ gives
\begin{align*}
	&\E[(X_k^\top(\theta_{k,r}-\theta_{\star}))^2 \vert X_k]
    \\
    &=X_k^\top\E[(\theta_{k,r}-\theta_{\star})(\theta_{k,r}-\theta_{\star})^\top \vert X_k]X_k
	\\
	&\leq
	X_k^\top(\mathbb{I}_d-\alpha_kX_kX_k^\top)\E[(\theta_{k,r-1}-\theta_{\star})(\theta_{k,r-1}-\theta_{\star})^\top \vert X_k](\mathbb{I}_d-\alpha_kX_kX_k^\top)X_k
	+4\alpha_k^2(\ell-1) (X_k^\top X_k)^2 
	\\
 &\quad+\alpha_k^2\Big(\E[(X_k^\top(\theta_{\star}-\theta_{k,r-1}))^2\vert X_k]+1\Big)X_k^\top\big(2X_kX_k^\top+X_k^\top X_k\mathbb{I}_d\big)X_k
 \\
 &=	(1-\alpha_kX_k^\top X_k)^2\E[(X_k^\top(\theta_{k,r-1}-\theta_{\star}))^2 \vert X_k]
	+4\alpha_k^2(\ell-1) (X_k^\top X_k)^2 
	\\
 &\quad+3\alpha_k^2 (X_k^\top X_k)^2\Big(\E[(X_k^\top(\theta_{\star}-\theta_{k,r-1}))^2\vert X_k]+1\Big)
  \\
 &=	(1-2\alpha_kX_k^\top X_k+4\alpha_k^2(X_k^\top X_k)^2)\E[(X_k^\top(\theta_{k,r-1}-\theta_{\star}))^2 \vert X_k]
	+4\alpha_k^2(\ell-1) (X_k^\top X_k)^2 
	\\
 &\quad+3\alpha_k^2 (X_k^\top X_k)^2
 \\
 &\leq 	(1-\alpha_kX_k^\top X_k)\E[(X_k^\top(\theta_{k,r-1}-\theta_{\star}))^2 \vert X_k]
	+4\alpha_k^2\ell (X_k^\top X_k)^2.
\end{align*}
Applying this inequality recursively now gives
\begin{align*}
	&\E[(X_k^\top(\theta_{k,r}-\theta_{\star}))^2 \vert X_k]
 \\
 &\leq 	(1-\alpha_kX_k^\top X_k)^r\E[(X_k^\top(\theta_{k-1,\ell}-\theta_{\star}))^2 \vert X_k]
	+4\alpha_k^2\ell (X_k^\top X_k)^2\sum_{i=0}^{r-1}(1-\alpha_kX_k^\top X_k)^i
    \\
    &\leq (1-\alpha_kX_k^\top X_k)^r\E[(X_k^\top(\theta_{k-1,\ell}-\theta_{\star}))^2 \vert X_k]
	+4r\ell\alpha_k^2 (X_k^\top X_k)^2,
\end{align*}
where we used that $\alpha_k X_k^\top X_k\leq 1/4$ guarantees $\vert 1-\alpha_kX_k^\top X_k\vert \leq 1$. 
This concludes the proof.
\end{proof}
Combining the previous lemma with Proposition \ref{prop: rec 1} now allows us to derive an upper bound for the MSPE of $\theta_{k,\ell}$ in terms of $\theta_{k-1,\ell},$ which subsequently will lead to the main result of this paper in Theorem \ref{thm: mspe}.
\begin{proposition}\label{prop: rec mspe}
Let $k\in\N.$ If $X_k^\top X_k\leq b$ almost surely and $\alpha_k\leq (4\ell b)^{-1},$ then
\begin{align*}
	\operatorname{MSPE}(\theta_{k,\ell})
	&\leq
\Big(1-2\alpha_k\ell\lambda_{\min\neq 0}(\Sigma)+3\alpha_k^2b\ell\big(2\ell\lambda_{\max}(\Sigma)+\tr(\Sigma)\big)\Big)\operatorname{MSPE}(\theta_{k-1,\ell})
\\
&\quad +4\alpha_k^2b\ell\bigg(\ell\lambda_{\max}(\Sigma)
+\frac{15}{16}\tr(\Sigma)\bigg).
\end{align*}
\end{proposition}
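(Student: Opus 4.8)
The plan is to iterate the one‑substep Loewner recursion of Proposition~\ref{prop: rec 1} over $r=1,\dots,\ell$, thereby expressing the conditional error matrix at substep $\ell$ in terms of the \emph{deterministic} error covariance $N:=\E[(\theta_{k-1,\ell}-\theta_\star)(\theta_{k-1,\ell}-\theta_\star)^\top]$ of the previous iterate, and then to contract against $\Sigma$. The point that makes this work is that $\theta_{k,0}=\theta_{k-1,\ell}$ is independent of $X_k$, so that $\E[(\theta_{k,0}-\theta_\star)(\theta_{k,0}-\theta_\star)^\top\mid X_k]=N$ and $\E[(X_k^\top(\theta_{k,0}-\theta_\star))^2\mid X_k]=X_k^\top N X_k$; recall also $\operatorname{MSPE}(\theta)=\tr(\Sigma\,\E[(\theta-\theta_\star)(\theta-\theta_\star)^\top])$, so $\operatorname{MSPE}(\theta_{k-1,\ell})=\tr(\Sigma N)$.

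Writing $P_k:=\mathbb{I}_d-\alpha_k X_kX_k^\top$ (symmetric, positive semidefinite since $\alpha_k\|X_k\|^2\le\tfrac14$) and $M_{k,r}:=\E[(\theta_{k,r}-\theta_\star)(\theta_{k,r}-\theta_\star)^\top\mid X_k]$, applying Proposition~\ref{prop: rec 1} from $r=\ell$ down to $r=0$ and using monotonicity of $\leq_{\mathsf{L}}$ under conjugation and addition yields
\[
M_{k,\ell}\ \leq_{\mathsf{L}}\ P_k^\ell N P_k^\ell+\sum_{j=1}^{\ell}P_k^{\ell-j}E_{k,j-1}P_k^{\ell-j},
\]
with $E_{k,j-1}$ the noise matrix of Proposition~\ref{prop: rec 1}. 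Inside $E_{k,j-1}$ the factor $\E[(X_k^\top(\theta_{k,j-1}-\theta_\star))^2\mid X_k]$ is bounded, via Lemma~\ref{lemma: cond mse} (applicable because $\alpha_k\le(4\ell b)^{-1}\le(4\|X_k\|^2)^{-1}$) and $j\le\ell$, by $X_k^\top N X_k+4\ell^2\alpha_k^2\|X_k\|^4$. Substituting this in gives $M_{k,\ell}\leq_{\mathsf{L}}P_k^\ell N P_k^\ell+\widetilde E_k$ with $\widetilde E_k$ an explicit $X_k$‑measurable positive semidefinite matrix, whence
\[
\operatorname{MSPE}(\theta_{k,\ell})\ \le\ \E[\tr(\Sigma P_k^\ell N P_k^\ell)]+\E[\tr(\Sigma\widetilde E_k)].
\]

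For the leading term, $P_k^\ell=\mathbb{I}_d-\gamma_k X_kX_k^\top$ with $\gamma_k=\alpha_k\sum_{i=0}^{\ell-1}(1-\alpha_k\|X_k\|^2)^i$, so $\ell\alpha_k-\tfrac{\ell^2}{2}\alpha_k^2\|X_k\|^2\le\gamma_k\le\ell\alpha_k$, and
\[
\tr(\Sigma P_k^\ell N P_k^\ell)=\tr(\Sigma N)-2\gamma_k\,X_k^\top\Sigma N X_k+\gamma_k^2(X_k^\top N X_k)(X_k^\top\Sigma X_k).
\]
The quadratic term has expectation at most $\ell^2\alpha_k^2 b\,\lambda_{\max}(\Sigma)\operatorname{MSPE}(\theta_{k-1,\ell})$ (use $\gamma_k\le\ell\alpha_k$, $X_k^\top\Sigma X_k\le b\lambda_{\max}(\Sigma)$, $\E[X_k^\top N X_k]=\tr(\Sigma N)$). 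The cross term is the crux, and I expect the main difficulty to lie exactly here: $X_k^\top\Sigma N X_k$ is sign‑indefinite for individual $X_k$ ($\Sigma$ need not commute with $N$, and may be degenerate), so it must be integrated before being estimated, and a naive Cauchy--Schwarz bound would introduce a spurious factor $\lambda_{\max}(\Sigma)/\lambda_{\min\neq0}(\Sigma)$. To avoid this, write $N=\E_v[vv^\top]$ for $v:=\theta_{k-1,\ell}-\theta_\star\perp X_k$; then $\E[\gamma_k X_k^\top\Sigma N X_k]=\tr(\Gamma_k\Sigma N)$ with $\Gamma_k:=\E[\gamma_k X_kX_k^\top]$, and the bounds on $\gamma_k$ together with $\|X_k\|^2\le b$ and $\E[X_kX_k^\top]=\Sigma$ give $0\leq_{\mathsf{L}}\ell\alpha_k\Sigma-\Gamma_k=:D_k\leq_{\mathsf{L}}\tfrac{\ell^2}{2}\alpha_k^2 b\,\Sigma$. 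Hence $\tr(\Gamma_k\Sigma N)=\ell\alpha_k\tr(\Sigma^2 N)-\tr(D_k\Sigma N)$, where $\tr(\Sigma^2 N)\ge\lambda_{\min\neq0}(\Sigma)\tr(\Sigma N)$ (from $\Sigma^2\geq_{\mathsf{L}}\lambda_{\min\neq0}(\Sigma)\Sigma$) and $\tr(D_k\Sigma N)=\E_v[v^\top D_k\Sigma v]$ is controlled by Cauchy--Schwarz in the $D_k$‑semi‑inner product, using $D_k\leq_{\mathsf{L}}\tfrac{\ell^2}{2}\alpha_k^2 b\Sigma$ and $\Sigma^3\leq_{\mathsf{L}}\lambda_{\max}(\Sigma)^2\Sigma$, by $\tfrac{\ell^2}{2}\alpha_k^2 b\,\lambda_{\max}(\Sigma)\,\E_v[v^\top\Sigma v]=\tfrac{\ell^2}{2}\alpha_k^2 b\,\lambda_{\max}(\Sigma)\operatorname{MSPE}(\theta_{k-1,\ell})$ --- with no condition number, precisely because one copy of $\Sigma$ in $\Sigma N$ pairs with $D_k$. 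Altogether $\E[\tr(\Sigma P_k^\ell N P_k^\ell)]\le(1-2\ell\alpha_k\lambda_{\min\neq0}(\Sigma)+2\ell^2\alpha_k^2 b\lambda_{\max}(\Sigma))\operatorname{MSPE}(\theta_{k-1,\ell})$.

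Finally, for $\E[\tr(\Sigma\widetilde E_k)]$ I would discard the conjugating powers using $\|P_k^{\ell-j}\|\le1$ (so $P_k^{\ell-j}X_kX_k^\top P_k^{\ell-j}\leq_{\mathsf{L}}X_kX_k^\top$ and $P_k^{\ell-j}\|X_k\|^2\mathbb{I}_dP_k^{\ell-j}\leq_{\mathsf{L}}\|X_k\|^2\mathbb{I}_d$) and estimate the resulting $\ell$ expectations with $\|X_k\|^2\le b$, $X_k^\top\Sigma X_k\le b\lambda_{\max}(\Sigma)$, $\E[X_k^\top\Sigma X_k]=\tr(\Sigma^2)\le\lambda_{\max}(\Sigma)\tr(\Sigma)$, $\tr(\Sigma)\le b$, $\E[X_k^\top N X_k]=\operatorname{MSPE}(\theta_{k-1,\ell})$, $\E[\|X_k\|^4]\le b\,\tr(\Sigma)$, and $\alpha_k\le(4\ell b)^{-1}$ to absorb the $\alpha_k^4$ contributions from the $4\ell^2\alpha_k^2\|X_k\|^4$ correction. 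The part proportional to $\operatorname{MSPE}(\theta_{k-1,\ell})$ (coming from the $X_k^\top N X_k$ factors) folds into the contraction coefficient, and, using $\lambda_{\max}(\Sigma)\le\tr(\Sigma)$, the remaining additive part fits inside $4\alpha_k^2 b\ell(\ell\lambda_{\max}(\Sigma)+\tfrac{15}{16}\tr(\Sigma))$; collecting terms and bounding $\ell\le\ell^2$ where convenient gives the claim. Apart from the cross‑term analysis, the computation is careful but routine Loewner/trace bookkeeping, with $\alpha_k\le(4\ell b)^{-1}$ used throughout to keep all higher‑order remainders small.
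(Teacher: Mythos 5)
Your proposal is correct and follows essentially the same route as the paper's proof: iterate Proposition~\ref{prop: rec 1} with the variance factor controlled by Lemma~\ref{lemma: cond mse}, write $(\mathbb{I}_d-\alpha_kX_kX_k^\top)^\ell=\mathbb{I}_d-\gamma_kX_kX_k^\top$, linearize $\gamma_k\approx\ell\alpha_k$ via a second-order Taylor bound, and extract the contraction from $\Sigma^2\geq_{\mathsf{L}}\lambda_{\min\neq0}(\Sigma)\Sigma$ while taming the remainder with a Cauchy--Schwarz argument that uses $\Sigma^3\leq_{\mathsf{L}}\lambda_{\max}(\Sigma)^2\Sigma$ to avoid any condition number. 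The only deviation is cosmetic: you package the cross-term remainder through the deterministic matrix $D_k=\ell\alpha_k\Sigma-\E[\gamma_kX_kX_k^\top]$ and a $D_k$-weighted Cauchy--Schwarz, whereas the paper adds and subtracts $2\alpha_k\ell\,\E[(\theta_{k-1,\ell}-\theta_\star)^\top\Sigma X_kX_k^\top(\theta_{k-1,\ell}-\theta_\star)]$ and applies Cauchy--Schwarz to the resulting expectation directly; both yield the same bound up to a factor of two in the quadratic correction.
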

\begin{proof}
Proposition \ref{prop: rec 1} gives
\begin{align*}
	&\E[(\theta_{k,r}-\theta_{\star})(\theta_{k,r}-\theta_{\star})^\top \vert X_k]
	\\
	&\leq_{\mathsf L}
	(\mathbb{I}_d-\alpha_kX_kX_k^\top)\E[(\theta_{k,r-1}-\theta_{\star})(\theta_{k,r-1}-\theta_{\star})^\top \vert X_k](\mathbb{I}_d-\alpha_kX_kX_k^\top)
	+4\alpha_k^2(\ell-1) X_k X_k^\top
	\\
 &\quad+\alpha_k^2\Big(\E[(X_k^\top(\theta_{\star}-\theta_{k,r-1}))^2\vert X_k]+1\Big)\big(2X_kX_k^\top+X_k^\top X_k\mathbb{I}_d\big)
 \end{align*}
 Using Lemma \ref{lemma: cond mse}, $\alpha_k\leq (4\ell b)^{-1},$ and $X_kX_k^\top \leq_{\mathsf L} X_k^\top X_k \mathbb{I}_d,$
 \begin{align*}
        &\Big(\E[(X_k^\top(\theta_{\star}-\theta_{k,r-1}))^2\vert X_k]+1\Big)\big(2X_kX_k^\top+X_k^\top X_k\mathbb{I}_d\big) \\
		&\leq_{\mathsf L}3X_k^\top X_k\mathbb{I}_d \bigg( (1-\alpha_k X_k^\top X_k)^{r-1}\E[(X_k^\top(\theta_{\star}-\theta_{k-1,\ell}))^2\vert X_k]+4(r-1)\ell\alpha_k^2(X_k^\top X_k)^2+1\bigg)
	\\
 &\leq_{\mathsf L}3X_k^\top X_k\mathbb{I}_d \bigg( (1-\alpha_k X_k^\top X_k)^{r-1}\E[(X_k^\top(\theta_{\star}-\theta_{k-1,\ell}))^2\vert X_k]+\frac 54\bigg).
	\end{align*}
	Combining the previous inequalities and applying the combined inequality recursively using moreover
    \begin{align*}
        \sum_{i=0}^{\ell-1}(\mathbb{I}_d-\alpha_kX_kX_k^\top)^i X_kX_k^\top(\mathbb{I}_d-\alpha_kX_kX_k^\top)^i
        =X_kX_k^\top \sum_{i=0}^{\ell-1}(1-\alpha_kX_k^\top X_k)^{2i}
        \leq \ell X_kX_k^\top
    \end{align*}
    yields
	\begin{align*}
	&\E[(\theta_{k,\ell}-\theta_{\star})(\theta_{k,\ell}-\theta_{\star})^\top\vert X_k]
	\\
	&\leq_{\mathsf L} 		(\mathbb{I}_d-\alpha_kX_kX_k^\top)^\ell\E[(\theta_{k-1,\ell}-\theta_{\star})(\theta_{k-1,\ell}-\theta_{\star})^\top](\mathbb{I}_d-\alpha_kX_kX_k^\top)^\ell
	\\&\quad +4\alpha_k^2(\ell-1)\sum_{i=0}^{\ell-1}(\mathbb{I}_d-\alpha_kX_kX_k^\top)^i X_kX_k^\top(\mathbb{I}_d-\alpha_kX_kX_k^\top)^i
	\\
	&\quad+3\alpha_k^2X_k^\top X_k\sum_{i=0}^{\ell-1}\bigg(\big( (1-\alpha_k X_k^\top X_k)^{\ell-1-i}\E[(X_k^\top(\theta_{\star}-\theta_{k-1,\ell}))^2\vert X_k]
	+\frac 54\big)
(\mathbb{I}_d-\alpha_kX_kX_k^\top)^{2i}\bigg)
	\\
&\leq_{\mathsf L}	(\mathbb{I}_d-\alpha_kX_kX_k^\top)^\ell\E[(\theta_{k-1,\ell}-\theta_{\star})(\theta_{k-1,\ell}-\theta_{\star})^\top](\mathbb{I}_d-\alpha_kX_kX_k^\top)^\ell
\\&\quad +4\alpha_k^2(\ell-1)\ell X_kX_k^\top
\\
&\quad+3\alpha_k^2X_k^\top X_k\sum_{i=0}^{\ell-1}\bigg(\big( (1-\alpha_kX_k^\top X_k)^{\ell-1-i}\E[(X_k^\top(\theta_{\star}-\theta_{k-1,\ell}))^2\vert X_k]
+\frac 54\big)
(\mathbb{I}_d-\alpha_kX_kX_k^\top)^{2i}\bigg).
\end{align*}
The positive semi-definite matrix $X_kX_k^\top$ is rank one with eigenvector $\|X_k\|^{-1} X_k$ corresponding to the only non-zero eigenvalue $\|X_k\|^2.$ Since also $\alpha_k \leq 1/\|X_k\|,$ $\mathbb{I}_d-\alpha_kX_kX_k^\top$ is positive semi-definite and all eigenvalues lie in $[0,1].$ Thus, $(\mathbb{I}_d-\alpha_kX_kX_k^\top)^n$ is also positive semi-definite and all eigenvalues lie in $[0,1].$ Hence, $(\mathbb{I}_d-\alpha_kX_kX_k^\top)^n \leq_{\mathsf L}\mathbb{I}_d$ and thus,
	\begin{align*}
	&\E[(\theta_{k,\ell}-\theta_{\star})(\theta_{k,\ell}-\theta_{\star})^\top\vert X_k]
	\\
	&\leq_{\mathsf L} 		(\mathbb{I}_d-\frac{1-(1-\alpha_kX_k^\top X_k)^{\ell}}{X_k^\top X_k}X_kX_k^\top)\E[(\theta_{k-1,\ell}-\theta_{\star})(\theta_{k-1,\ell}-\theta_{\star})^\top](\mathbb{I}_d-\frac{1-(1-\alpha_kX_k^\top X_k)^{\ell}}{X_k^\top X_k}X_kX_k^\top)
\\&\quad +4\alpha_k^2(\ell-1)\ell X_kX_k^\top
+3\alpha_k^2X_k^\top X_k\mathbb{I}_d\sum_{i=0}^{\ell-1}\big( (1-\alpha_kX_k^\top X_k)^{\ell-1-i}\E[(X_k^\top(\theta_{\star}-\theta_{k-1,\ell}))^2\vert X_k]
+\frac 54\big)
		\\
&\leq_{\mathsf L}\E[(\theta_{k-1,\ell}-\theta_{\star})(\theta_{k-1,\ell}-\theta_{\star})^\top]
\\
&\quad-\frac{1-(1-\alpha_kX_k^\top X_k)^{\ell}}{X_k^\top X_k}\Big(X_kX_k^\top\E[(\theta_{k-1,\ell}-\theta_{\star})(\theta_{k-1,\ell}-\theta_{\star})^\top]\\&\quad+\E[(\theta_{k-1,\ell}-\theta_{\star})(\theta_{k-1,\ell}-\theta_{\star})^\top]X_kX_k^\top \Big)
\\
&\quad +\Big(\frac{1-(1-\alpha_kX_k^\top X_k)^{\ell}}{X_k^\top X_k}\Big)^2X_k^\top\E[(\theta_{k-1,\ell}-\theta_{\star})(\theta_{k-1,\ell}-\theta_{\star})^\top]X_k X_kX_k^\top
\\&\quad +4\alpha_k^2(\ell-1)\ell X_kX_k^\top
+3\alpha_k^2X_k^\top X_k\ell\mathbb{I}_d\big( \E[(X_k^\top(\theta_{\star}-\theta_{k-1,\ell}))^2\vert X_k]
+\frac 54\big)
		\\
&\leq_{\mathsf L}\E[(\theta_{k-1,\ell}-\theta_{\star})(\theta_{k-1,\ell}-\theta_{\star})^\top]
\\&\quad-\frac{1-(1-\alpha_kX_k^\top X_k)^{\ell}}{X_k^\top X_k}(X_kX_k^\top\E[(\theta_{k-1,\ell}-\theta_{\star})\Big(\theta_{k-1,\ell}-\theta_{\star})^\top]
\\&\quad+\E[(\theta_{k-1,\ell}-\theta_{\star})(\theta_{k-1,\ell}-\theta_{\star})^\top]X_kX_k^\top \Big)
\\
&\quad +4\alpha_k^2\ell^2( X_k^\top\E[(\theta_{k-1,\ell}-\theta_{\star})(\theta_{k-1,\ell}-\theta_{\star})^\top]X_k +1)X_kX_k^\top
\\
&\quad
+3\alpha_k^2X_k^\top X_k\ell\mathbb{I}_d\big( X_k^\top\E[(\theta_{k-1,\ell}-\theta_{\star})(\theta_{k-1,\ell}-\theta_{\star})^\top]X_k
+\frac 54\big),
\end{align*}
where we argued as in \eqref{eq: 1} in the last step. Now let $X$ be an independent draw from the input distribution. The independence of $X,X_k$ and $\theta_{k-1,\ell}$ gives
\begin{align*}
&\operatorname{MSPE}(\theta_{k,\ell})
\\
&=\E[X^\top\E[(\theta_{k,\ell}-\theta_{\star})(\theta_{k,\ell}-\theta_{\star})^\top] X]
\\
&\leq \operatorname{MSPE}(\theta_{k-1,\ell})
\\&\quad-2\E\Big[\frac{1-(1-\alpha_kX_k^\top X_k)^{\ell}}{X_k^\top X_k}X^\top X_kX_k^\top\E[(\theta_{k-1,\ell}-\theta_{\star})(\theta_{k-1,\ell}-\theta_{\star})^\top]X\Big]
\\
&\quad +4\alpha_k^2\ell^2 \E[(X_k^\top\E[(\theta_{k-1,\ell}-\theta_{\star})(\theta_{k-1,\ell}-\theta_{\star})^\top]X_k+1) (X^\top X_k)^2]
\\
&\quad+3\alpha_k^2\ell\E\big[X^\top XX_k^\top X_k\big( X_k^\top\E[(\theta_{k-1,\ell}-\theta_{\star})(\theta_{k-1,\ell}-\theta_{\star})^\top]X_k
+\frac 54\big)\big]
\\
&= \operatorname{MSPE}(\theta_{k-1,\ell})
\\&\quad-2\E\Big[\frac{1-(1-\alpha_kX_k^\top X_k)^{\ell}}{X_k^\top X_k}X^\top X_kX_k^\top\E[(\theta_{k-1,\ell}-\theta_{\star})(\theta_{k-1,\ell}-\theta_{\star})^\top]X\Big]
\\
&\quad +4\alpha_k^2\ell^2 \E[(X_k^\top\E[(\theta_{k-1,\ell}-\theta_{\star})(\theta_{k-1,\ell}-\theta_{\star})^\top]X_k+1) X_k^\top\Sigma X_k]
\\
&\quad+3\alpha_k^2\tr(\Sigma)\ell\E\big[X_k^\top X_k\big( X_k^\top\E[(\theta_{k-1,\ell}-\theta_{\star})(\theta_{k-1,\ell}-\theta_{\star})^\top]X_k
+\frac 54\big)\big]
\\
&\leq \operatorname{MSPE}(\theta_{k-1,\ell})
\\&\quad-2\E\Big[\frac{1-(1-\alpha_kX_k^\top X_k)^{\ell}}{X_k^\top X_k}X^\top X_kX_k^\top\E[(\theta_{k-1,\ell}-\theta_{\star})(\theta_{k-1,\ell}-\theta_{\star})^\top]X\Big]
\\
&\quad +4\alpha_k^2\ell^2\lambda_{\max}(\Sigma)b \big(\E[X_k^\top\E[(\theta_{k-1,\ell}-\theta_{\star})(\theta_{k-1,\ell}-\theta_{\star})^\top]X_k ]+1\big)
\\
&\quad+3\alpha_k^2\tr(\Sigma)b\ell\big(\E[ X_k^\top\E[(\theta_{k-1,\ell}-\theta_{\star})(\theta_{k-1,\ell}-\theta_{\star})^\top]X_k
]+\frac 54\big)
\\
&
=\bigg(1+4\alpha_k^2b\ell\bigg(\ell\lambda_{\max}(\Sigma)+\frac{3}{4}\tr(\Sigma)\bigg)\bigg)\operatorname{MSPE}(\theta_{k-1,\ell})
\\&\quad-2\E\Big[\frac{1-(1-\alpha_kX_k^\top X_k)^{\ell}}{X_k^\top X_k}X^\top X_kX_k^\top\E[(\theta_{k-1,\ell}-\theta_{\star})(\theta_{k-1,\ell}-\theta_{\star})^\top]X\Big]
\\
&\quad +4\alpha_k^2b\ell\bigg(\ell\lambda_{\max}(\Sigma)
+\frac{15}{16}\tr(\Sigma)\bigg).
\end{align*}
Now applying Taylor's Theorem to the function $f(x)=(1-\alpha_k x)^\ell,$ gives for $x\in[0,b],$
\begin{align*}
\big\vert 1-\alpha_k\ell x-(1-\alpha_k x)^\ell\big\vert
&=\big\vert f(0)+f'(0)x-f(x)\big\vert
\\
&\leq x^2\sup_{x\in[0,b]} \Big\vert \frac{f''(x)}{2} \Big\vert
\\
&\leq \frac{x^2\alpha_k^2\ell^2}{2}.
\end{align*}
Thus
\begin{align*}
	&\operatorname{MSPE}(\theta_{k,\ell})
	\\
	&\leq\bigg(1+4\alpha_k^2b\ell\bigg(\ell\lambda_{\max}(\Sigma)+\frac{3}{4}\tr(\Sigma)\bigg)\bigg)\operatorname{MSPE}(\theta_{k-1,\ell})
	\\&\quad-2\E\Big[\frac{1-(1-\alpha_kX_k^\top X_k)^{\ell}}{X_k^\top X_k}X^\top X_kX_k^\top\E[(\theta_{k-1,\ell}-\theta_{\star})(\theta_{k-1,\ell}-\theta_{\star})^\top]X\Big]
	\\
	&\quad +4\alpha_k^2b\ell\bigg(\ell\lambda_{\max}(\Sigma)
	+\frac{15}{16}\tr(\Sigma)\bigg)-2\alpha_k\ell\E\Big[(\theta_{k-1,\ell}-\theta_{\star})^\top\Sigma X_kX_k^\top(\theta_{k-1,\ell}-\theta_{\star})\Big]
	\\&\quad+2\alpha_k\ell\E\Big[(\theta_{k-1,\ell}-\theta_{\star})^\top\Sigma X_kX_k^\top(\theta_{k-1,\ell}-\theta_{\star})\Big]
	\\
	&=\bigg(1+4\alpha_k^2b\ell\bigg(\ell\lambda_{\max}(\Sigma)+\frac{3}{4}\tr(\Sigma)\bigg)\bigg)\operatorname{MSPE}(\theta_{k-1,\ell})
	\\&\quad-2\E\Big[\frac{1-\alpha_k\ell X_k^\top X_k-(1-\alpha_kX_k^\top X_k)^{\ell}}{X_k^\top X_k}(\theta_{k-1,\ell}-\theta_{\star})^\top\Sigma X_kX_k^\top(\theta_{k-1,\ell}-\theta_{\star})\Big]
	\\
	&\quad +4\alpha_k^2b\ell\bigg(\ell\lambda_{\max}(\Sigma)
	+\frac{15}{16}\tr(\Sigma)\bigg)-2\alpha_k\ell\E\Big[(\theta_{k-1,\ell}-\theta_{\star})^\top\Sigma^2(\theta_{k-1,\ell}-\theta_{\star})\Big]
		\\
	&\leq
	\bigg(1-2\alpha_k\ell\lambda_{\min\neq 0}(\Sigma)+4\alpha_k^2b\ell\bigg(\ell\lambda_{\max}(\Sigma)+\frac{3}{4}\tr(\Sigma)\bigg)\bigg)\operatorname{MSPE}(\theta_{k-1,\ell})
	\\&\quad+2\alpha_k^2\ell^2\E\Big[X_k^\top X_k\vert (\theta_{k-1,\ell}-\theta_{\star})^\top\Sigma X_kX_k^\top(\theta_{k-1,\ell}-\theta_{\star})\vert\Big]
	\\
	&\quad +4\alpha_k^2b\ell\bigg(\ell\lambda_{\max}(\Sigma)
	+\frac{15}{16}\tr(\Sigma)\bigg)
		\\
	&\leq
	\bigg(1-2\alpha_k\ell\lambda_{\min\neq 0}(\Sigma)+4\alpha_k^2b\ell\bigg(\ell\lambda_{\max}(\Sigma)+\frac{3}{4}\tr(\Sigma)\bigg)\bigg)\operatorname{MSPE}(\theta_{k-1,\ell})
	\\&\quad+2\alpha_k^2\ell^2b\Big(\E[ ((\theta_{k-1,\ell}-\theta_{\star})^\top\Sigma X_k)^2]\E[(X_k^\top(\theta_{k-1,\ell}-\theta_{\star}))^2]\Big)^{1/2}
	\\
	&\quad +4\alpha_k^2b\ell\bigg(\ell\lambda_{\max}(\Sigma)
	+\frac{15}{16}\tr(\Sigma)\bigg)
			\\
	&=
	\bigg(1-2\alpha_k\ell\lambda_{\min\neq 0}(\Sigma)+4\alpha_k^2b\ell\bigg(\ell\lambda_{\max}(\Sigma)+\frac{3}{4}\tr(\Sigma)\bigg)\bigg)\operatorname{MSPE}(\theta_{k-1,\ell})
	\\&\quad+2\alpha_k^2\ell^2b\Big(\E[ (\theta_{k-1,\ell}-\theta_{\star})^\top\Sigma^3(\theta_{k-1,\ell}-\theta_{\star})]\operatorname{MSPE}(\theta_{k-1,\ell})\Big)^{1/2}
	\\
	&\quad +4\alpha_k^2b\ell\bigg(\ell\lambda_{\max}(\Sigma)
	+\frac{15}{16}\tr(\Sigma)\bigg)
				\\
	&\leq
	\big(1-2\alpha_k\ell\lambda_{\min\neq 0}(\Sigma)+3\alpha_k^2b\ell(2\ell\lambda_{\max}(\Sigma)+\tr(\Sigma))\big)\operatorname{MSPE}(\theta_{k-1,\ell})
	\\
	&\quad +4\alpha_k^2b\ell\bigg(\ell\lambda_{\max}(\Sigma)
	+\frac{15}{16}\tr(\Sigma)\bigg),
\end{align*}
where we used that $\Sigma^2\geq_{\mathsf L}\lambda_{\min\neq 0}(\Sigma)\Sigma$ and $\Sigma^3\leq_{\mathsf L}\lambda_{\max}(\Sigma)^2\Sigma$.
This completes the proof.
\end{proof}
Applying Proposition \ref{prop: rec mspe} inductively now almost immediately proves Theorem \ref{thm: mspe}.
\begin{proof}[Proof of Theorem \ref{thm: mspe}]
Note first that our assumptions guarantee $\alpha_i\leq (4\ell b)^{-1}$ for all $i\in[k]$ and additionally
\[\big(1-2\alpha_i\ell\lambda_{\min\neq 0}(\Sigma)+3\alpha_i^2b\ell(2\ell\lambda_{\max}(\Sigma)+\tr(\Sigma))\big)
\leq (1-\alpha_i\ell\lambda_{\min\neq 0}(\Sigma)),\quad i\in[k]. \] Thus, we can apply Proposition \ref{prop: rec mspe} and obtain by arguing inductively
\begin{align*}
	&\operatorname{MSPE}(\theta_{k,\ell})
\\
&\leq
\big(1-\alpha_k\ell\lambda_{\min\neq 0}(\Sigma)\big)\operatorname{MSPE}(\theta_{k-1,\ell})
+4\alpha_k^2b\ell\bigg(\ell\lambda_{\max}(\Sigma)
+\frac{15}{16}\tr(\Sigma)\bigg)
\\
&\leq\operatorname{MSPE}(\theta_{0,\ell}) \prod_{i=1}^{k}\big(1-\alpha_i\ell\lambda_{\min\neq 0}(\Sigma)\big)
\\
&\quad+4b\ell\bigg(\ell\lambda_{\max}(\Sigma)
+\frac{15}{16}\tr(\Sigma)\bigg)\sum_{i=1}^{k}\alpha_i^2\prod_{j=i+1}^{k}\big(1-\alpha_j\ell\lambda_{\min\neq 0}(\Sigma)\big)
\\
&
\leq
\exp\bigg(-\ell\lambda_{\min\neq 0}(\Sigma)\sum_{i=1}^{k}\alpha_i\bigg)\operatorname{MSPE}(\theta_{0,\ell}) 
\\
&\quad+4b\ell\bigg(\ell\lambda_{\max}(\Sigma)
+\frac{15}{16}\tr(\Sigma)\bigg)\sum_{i=1}^{k}\alpha_i^2\exp\bigg(-\ell\lambda_{\min\neq 0}(\Sigma)\sum_{j=i+1}^{k}\alpha_j\bigg).
\end{align*}
Now, denoting $c=c_1c_2$ it holds
\[\alpha_i\geq\frac{2}{\ell\lambda_{\min\neq0}(\Sigma)(c+i)}, \]
and arguing as in the derivation of equation (4.11) in \cite{bos24} gives for $k_1\leq k_2\in[k]$ 
\begin{align*}
	&\exp\left(-\lambda_{\min\neq 0}(\Sigma)\ell\sum_{i=k_1}^{k_2}\alpha_i\right)\leq \left(\frac{k_1+c}{k_2+1+c}\right)^2,
\end{align*}
leading to 
\begin{align*}
	\operatorname{MSPE}(\theta_{k,\ell})
&
\leq 
\left(\frac{1+c}{k+1+c}\right)^2\operatorname{MSPE}(\theta_{0,\ell}) 
+4b\ell\bigg(\ell\lambda_{\max}(\Sigma)
+\frac{15}{16}\tr(\Sigma)\bigg)\sum_{i=1}^{k}\alpha_i^2\left(\frac{i+1+c}{k+1+c}\right)^2
\\
&
=
\left(\frac{1+c}{k+1+c}\right)^2\operatorname{MSPE}(\theta_{0,\ell}) 
+4bc_1^2\bigg(\lambda_{\max}(\Sigma)
+\frac{15}{16\ell}\tr(\Sigma)\bigg)\frac{1}{(k+1+c)^2}\sum_{i=1}^{k}\left(\frac{i+1+c}{i+c}\right)^2
\\
&
\leq 
\left(\frac{1+c}{k+1+c}\right)^2\operatorname{MSPE}(\theta_{0,\ell}) 
+16bc_1^2\bigg(\lambda_{\max}(\Sigma)
+\frac{\tr(\Sigma)}{\ell}\bigg)\frac{k}{(k+1+c)^2},
\end{align*}
where we used the definition of $\alpha_i$ in the second to last step.
This completes the proof.
\end{proof}
\section{Analysis of Adjusted Forward Gradient Descent}\label{app: afgd}

\begin{lemma}
\label{lem.moments}
For a $d$-dimensional vector $a$ and  $\xi\sim \mathcal{N}(0,\mathbb{I}_d),$ $\Vert a\Vert\E[\sign(a^\top \xi) \xi]= \sqrt{2/\pi}a.$ 
\end{lemma}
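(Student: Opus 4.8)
The plan is to reduce the statement to a one-dimensional Gaussian moment computation by exploiting the rotational invariance of the standard normal distribution. First I would dispose of the trivial case $a=0$, where both sides vanish, and henceforth assume $a\neq 0$ and set $u:=a/\Vert a\Vert$, so that $\sign(a^\top\xi)=\sign(u^\top\xi)$ and it suffices to show that $\E[\sign(u^\top\xi)\xi]=\sqrt{2/\pi}\,u$.

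Next I would decompose $\xi$ along $u$ and its orthogonal complement: with $P:=uu^\top$ and $Q:=\mathbb{I}_d-P$, write $\xi=(u^\top\xi)u+Q\xi$. Since $\xi$ is Gaussian and $\Cov(u^\top\xi,Q\xi)=Q\,\E[\xi\xi^\top]u=Qu=0$, the scalar $u^\top\xi\sim\mathcal{N}(0,1)$ and the vector $Q\xi$ are independent. Hence
\[
\E[\sign(u^\top\xi)\xi]=\E\big[\sign(u^\top\xi)\,u^\top\xi\big]\,u+\E\big[\sign(u^\top\xi)\big]\,\E[Q\xi]=\E\big[|u^\top\xi|\big]\,u,
\]
where the last equality uses $\E[Q\xi]=Q\,\E[\xi]=0$.

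Finally I would evaluate the one-dimensional moment $\E[|u^\top\xi|]=\E[|N|]$ for $N\sim\mathcal{N}(0,1)$, which equals $\int_{\R}|t|\,\tfrac{1}{\sqrt{2\pi}}e^{-t^2/2}\,\d t=\sqrt{2/\pi}$. Multiplying through by $\Vert a\Vert$ and recalling $\Vert a\Vert u=a$ gives the claim. There is essentially no substantive obstacle here; the only point requiring (minor) care is the justification that the component of $\xi$ in the direction $u$ is independent of its component in $u^\perp$, which is exactly where the Gaussian assumption enters — for a merely spherically symmetric $\xi$ one would instead invoke rotational invariance directly, choosing an orthogonal matrix $R$ with $Ru=e_1$, substituting $\eta:=R\xi\sim\mathcal{N}(0,\mathbb{I}_d)$, and reducing to $\E[\sign(\eta_1)\eta]=\sqrt{2/\pi}\,e_1$, which holds because $\E[\sign(\eta_1)\eta_j]=0$ for $j\neq 1$ by oddness in $\eta_j$ together with independence of the coordinates, while $\E[\sign(\eta_1)\eta_1]=\E[|\eta_1|]=\sqrt{2/\pi}$.
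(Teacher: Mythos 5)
Your argument is correct and follows essentially the same route as the paper: both normalize $a$ to a unit vector, decompose $\xi$ into its component along $a$ and the orthogonal remainder $\xi - (a^\top\xi)a$, deduce independence from the vanishing covariance together with joint Gaussianity, and conclude via $\E[|Z|]=\sqrt{2/\pi}$ for $Z\sim\mathcal{N}(0,1)$. The rotational-invariance variant you sketch at the end is a fine alternative but is not needed; the main argument already matches the paper's proof.
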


\begin{proof}
It is sufficient to consider vectors $a$ with $\|a\|=1.$ Then, $\Cov(a^\top\xi, \xi-a^\top\xi a)= \E[a^\top\xi (\xi-a^\top\xi a)^\top]= \E[a^\top \xi \xi^\top]-\E[(a^\top \xi)^2 a^\top]= a^\top - a^\top=0.$ Since $(a^\top\xi, \xi-a^\top\xi a)$ follows a multivariate normal distribution, $a^\top\xi$ and $\xi-a^\top\xi a$ are independent. If $Z\sim \mathcal{N}(0,1),$ $E[|Z|]=\sqrt{2/\pi}.$ Since $\xi =a^\top \xi a + (\xi-a^\top\xi a)$ and $a^\top \xi\sim \mathcal{N}(0,1),$ $\E[\xi \sign(a^\top \xi)]=\E[|a^\top \xi|] a+ 0= \sqrt{2/\pi}a.$
\end{proof}
To avoid confusion, we denote the aFGD($\ell$) iterates in the following by $\omega,$ that is,
$$\omega_{k,r}=\omega_{k,r-1}+\beta_k(Y_k-X_k^\top\omega_{k,r-1})\|X_k\| \sign(X_k^\top \xi_{k,r})\xi_{k,r},\quad r\in[\ell]. $$
Then it holds
\begin{align*}
 &\omega_{k,r}-\theta_\star
 \\
 &=\omega_{k,r-1}-\theta_\star+\beta_k(Y_k-X_k^\top\omega_{k,r-1})\|X_k\| \sign(X_k^\top \xi_{k,r})\xi_{k,r}   
 \\
 &=\omega_{k,r-1}-\theta_\star+\beta_kX_k^\top(\theta_\star-\omega_{k,r-1})\|X_k\| \sign(X_k^\top \xi_{k,r})\xi_{k,r} 
 +\beta_k\ep_k\|X_k\| \sign(X_k^\top \xi_{k,r})\xi_{k,r}  
 \\
 &=\big(\mathbb{I}_d-\beta_k\|X_k\| \sign(X_k^\top \xi_{k,r})\xi_{k,r}X_k^\top\big)(\omega_{k,r-1}-\theta_\star)
 +\beta_k\ep_k\|X_k\| \sign(X_k^\top \xi_{k,r})\xi_{k,r}  
\end{align*}
By Lemma \ref{lem.moments}, we have $\E[ \sign(X_k^\top \xi_{k,r})\xi_{k,r}\vert X_k]=\sqrt{2/\pi} X_k.$ Since the noise $\ep_k$ is centered and independent of $(X_k,\xi_{k,r}),$ the second term has vanishing expectation and hence,
\begin{align*}
 \E[ \omega_{k,r}-\theta_{\star}\vert X_k]
 &= \Big(\mathbb{I}_d- \sqrt{\frac{2}{\pi}} \beta_k\|X_k\| \sign(X_k^\top \xi_{k,r})\xi_{k,r}X_k^\top\Big) \E[ \omega_{k,r-1}-\theta_{\star}\vert X_k],
\end{align*}
which leads to the same bias bound as FGD($\ell$) (see Theorem \ref{thm: bias}). Additionally, we can show the following result on the MSE of aFGD($\ell$). An analogous result for the MSPE as in Theorem \ref{thm: mspe} is also derivable, however due to the similarities in the proofs we here only investigate the simpler MSE.
\begin{theorem}\label{thm: afgd mspe}
	Let $k\in\N,b>0,$ and assume $\Vert X_1\Vert^2\leq b$ and $\lambda_{\min}(\Sigma)>0$. If the learning rate is of the form
	\[\beta_i=\frac{c_1}{\ell(c_1c_2+i)},\quad i=1,\ldots,k, \]
	for constants $c_1,c_2>0$ satisfying
	\[c_1\geq \frac{\sqrt{32\pi}}{\lambda_{\min}(\Sigma)},\quad c_2\geq \sqrt{\frac \pi 2}b\Big(1\lor \frac{4d\kappa(\Sigma)}{\ell}\Big),  \]
	then,
	\begin{align*}
		\operatorname{MSE}(\omega_{k,\ell})
		\leq\left(\frac{1+c_1c_2}{k+1+c_1c_2}\right)^2
\operatorname{MSE}(\omega_{0})
+8c_1^2\tr(\Sigma)\Big(\frac{2}{\pi}+\frac{d}{\ell}\Big)\frac{k}{(k+1+c_1c_2)^2}.
	\end{align*}
\end{theorem}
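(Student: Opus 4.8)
The plan is to mirror the proof of Theorem~\ref{thm: mspe}: first establish a one-sample recursion of the form $\operatorname{MSE}(\omega_{k,\ell})\le(\text{contraction})\cdot\operatorname{MSE}(\omega_{k-1,\ell})+(\text{additive noise})$, and then unroll it against the schedule $\beta_i=c_1/(\ell(c_1c_2+i))$ exactly as in the proof of Theorem~\ref{thm: mspe}. The ingredients parallel Lemma~\ref{lemma: cov}, Lemma~\ref{lemma: ep}, Proposition~\ref{prop: rec 1} and Lemma~\ref{lemma: cond mse}, adapted to the aFGD update. The conditional-moment step is here \emph{simpler} than Lemma~\ref{lemma: cov}: given $X_k$, Lemma~\ref{lem.moments} gives $\E[\sign(X_k^\top\xi_{k,r})\xi_{k,r}\mid X_k]=\sqrt{2/\pi}\,X_k/\Vert X_k\Vert$, and since $\sign^2\equiv1$ almost surely one simply has $\E[(\sign(X_k^\top\xi_{k,r})\xi_{k,r})(\sign(X_k^\top\xi_{k,r})\xi_{k,r})^\top\mid X_k]=\E[\xi_{k,r}\xi_{k,r}^\top]=\mathbb{I}_d$, so no fourth-moment (Isserlis) computation is needed. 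For the $\ep_k$-cross-terms one proves the analogue of Lemma~\ref{lemma: ep}: unrolling $\omega_{k,r}-\theta_\star=(\mathbb{I}_d-\beta_k\Vert X_k\Vert\,\sign(X_k^\top\xi_{k,r})\xi_{k,r}X_k^\top)(\omega_{k,r-1}-\theta_\star)+\beta_k\ep_k\Vert X_k\Vert\,\sign(X_k^\top\xi_{k,r})\xi_{k,r}$ and taking expectations yields $\E[\ep_k\omega_{k,r}\mid X_k]=\big(1-(1-\beta_k\sqrt{2/\pi}\,\Vert X_k\Vert^2)^r\big)X_k/\Vert X_k\Vert^2$, together with the corresponding $\min(1,\,2r\beta_k\sqrt{2/\pi}\,\Vert X_k\Vert^2)$ bounds.

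Next I would establish the analogue of Proposition~\ref{prop: rec 1}: in $\E[(\omega_{k,r}-\theta_\star)(\omega_{k,r}-\theta_\star)^\top\mid X_k]$ the expectation over $\xi_{k,r}$ collapses all quadratic-in-$\xi$ terms to multiples of $X_kX_k^\top$ and $\Vert X_k\Vert^2\mathbb{I}_d$, while the $\ep_k$-cross-terms, which do \emph{not} vanish for $r\ge2$ since $\omega_{k,r-1}$ depends on $\ep_k,X_k$ and $\xi_{k,1},\dots,\xi_{k,r-1}$, are bounded via the previous step. This gives $\E[(\omega_{k,r}-\theta_\star)(\omega_{k,r}-\theta_\star)^\top\mid X_k]\le_{\mathsf{L}}(\mathbb{I}_d-\beta_k\sqrt{2/\pi}\,X_kX_k^\top)\E[(\omega_{k,r-1}-\theta_\star)(\omega_{k,r-1}-\theta_\star)^\top\mid X_k](\mathbb{I}_d-\beta_k\sqrt{2/\pi}\,X_kX_k^\top)+R_{k,r}$, with $R_{k,r}$ positive semidefinite and, after using $X_kX_k^\top\le_{\mathsf{L}}\Vert X_k\Vert^2\mathbb{I}_d$ and the Lemma~\ref{lemma: ep}-analogue, of order $\beta_k^2\Vert X_k\Vert^2(\E[(X_k^\top(\omega_{k,r-1}-\theta_\star))^2\mid X_k]+\ell)\mathbb{I}_d$. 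Sandwiching this between $X_k^\top(\cdot)X_k$ and iterating in $r$ yields the analogue of Lemma~\ref{lemma: cond mse}, namely $\E[(X_k^\top(\omega_{k,r}-\theta_\star))^2\mid X_k]\le(1-\beta_k\sqrt{2/\pi}\,\Vert X_k\Vert^2)^r\,\E[(X_k^\top(\omega_{k-1,\ell}-\theta_\star))^2\mid X_k]+C\,r\ell\,\beta_k^2\Vert X_k\Vert^4$ for an absolute constant $C$.

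Unrolling the matrix recursion over $r=1,\dots,\ell$ then gives $\E[(\omega_{k,\ell}-\theta_\star)(\omega_{k,\ell}-\theta_\star)^\top\mid X_k]\le_{\mathsf{L}}(\mathbb{I}_d-\gamma_kX_kX_k^\top)\E[(\omega_{k-1,\ell}-\theta_\star)(\omega_{k-1,\ell}-\theta_\star)^\top](\mathbb{I}_d-\gamma_kX_kX_k^\top)+\widetilde R_k$ with $\gamma_k\Vert X_k\Vert^2=1-(1-\beta_k\sqrt{2/\pi}\,\Vert X_k\Vert^2)^\ell$, so the $\ell$-factor enters through a Taylor comparison of this $\ell$-fold product with $\mathbb{I}_d-\ell\beta_k\sqrt{2/\pi}\,X_kX_k^\top$, exactly as $(\mathbb{I}_d-\alpha_kX_kX_k^\top)^\ell$ is handled in Proposition~\ref{prop: rec mspe}; the residual $\widetilde R_k$ is controlled using the cond-MSE analogue above. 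Taking the trace and then the full expectation, using $\omega_{k-1,\ell}\perp X_k$, $\E[X_kX_k^\top]=\Sigma$, $\E[\Vert X_k\Vert^2]=\tr(\Sigma)$ and $\Vert X_k\Vert^2\le b$, the contraction becomes $1-c_0\,\ell\beta_k\lambda_{\min}(\Sigma)+\cO(\beta_k^2)$ for a fixed positive constant $c_0$ --- here $\lambda_{\min}(\Sigma)>0$ is precisely what converts the quantity $\E[(\omega_{k-1,\ell}-\theta_\star)^\top\Sigma(\omega_{k-1,\ell}-\theta_\star)]$ that appears naturally into a genuine contraction of $\operatorname{MSE}$ --- and the additive part becomes $\lesssim\beta_k^2\ell^2\,\tr(\Sigma)\big(\tfrac{2}{\pi}+\tfrac{d}{\ell}\big)$, the $d$ originating from $\tr(\Vert X_k\Vert^2\mathbb{I}_d)$ in the isotropic injected noise $\beta_k^2\,\E[\ep_k^2]\,\Vert X_k\Vert^2\mathbb{I}_d$. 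The hypotheses on $c_1,c_2$ are exactly what is needed so that $\beta_i\Vert X_1\Vert^2$ is small enough for each one-step contraction to hold and for the $\cO(\beta_k^2)$ residuals --- in particular the one of order $\beta_k^2 d\kappa(\Sigma)/\ell$, which forces the $4d\kappa(\Sigma)/\ell$ factor in the lower bound on $c_2$ --- to be absorbed into $-\tfrac{1}{2} c_0\,\ell\beta_k\lambda_{\min}(\Sigma)$. One then finishes as in the proof of Theorem~\ref{thm: mspe}: bound $\prod_{j=i+1}^{k}\big(1-\tfrac{1}{2} c_0\,\ell\beta_j\lambda_{\min}(\Sigma)\big)$ by $\exp\!\big(-\tfrac{1}{2} c_0\,\ell\lambda_{\min}(\Sigma)\sum_{j}\beta_j\big)$, insert the definition of $\beta_j$, and apply the harmonic-sum estimate behind equation~(4.11) of~\cite{bos24} to extract the $\big(\tfrac{1+c_1c_2}{k+1+c_1c_2}\big)^2$ and $\tfrac{k}{(k+1+c_1c_2)^2}$ factors together with the explicit constant $8c_1^2\,\tr(\Sigma)\big(\tfrac{2}{\pi}+\tfrac{d}{\ell}\big)$.

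I expect the main obstacle to be the dependence created by repeated sampling, the point stressed throughout the paper. For $r\ge2$ the matrix multiplying $\omega_{k,r-1}$ contains the very $\ep_k$ and $X_k$ that are already entangled in $\omega_{k,r-1}$, so the $\ep_k$-cross-terms are genuinely nonzero and have to be tracked; the remedy is to condition on $X_k$ throughout, use that $\xi_{k,r}$ is independent of everything carrying an index $<r$ within block $k$, and substitute the closed form $\E[\ep_k\omega_{k,r}\mid X_k]=\big(1-(1-\beta_k\sqrt{2/\pi}\,\Vert X_k\Vert^2)^r\big)X_k/\Vert X_k\Vert^2$ before unrolling in $r$. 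The secondary subtlety is the bookkeeping that keeps the $\cO(\beta_k^2)$ residuals produced by the $\ell$-fold unrolling --- especially the $d\kappa(\Sigma)$-sized one --- dominated by the linear-in-$\beta_k$ contraction, which is what pins down the condition $c_2\ge\sqrt{\pi/2}\,b\,(1\vee 4d\kappa(\Sigma)/\ell)$.
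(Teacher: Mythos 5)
Your proposal is correct and follows essentially the same route as the paper's proof: the same conditional recursion in $r$ built from Lemma~\ref{lem.moments} and the closed form for $\E[\ep_k\omega_{k,r}\mid X_k]$, the same auxiliary bound on $\E[(X_k^\top(\omega_{k,r}-\theta_\star))^2\mid X_k]$, the same Loewner comparison of $(\mathbb{I}_d-\sqrt{2/\pi}\,\beta_kX_kX_k^\top)^\ell$ with $\mathbb{I}_d-\tfrac{\ell\beta_k}{\sqrt{2\pi}}X_kX_k^\top$, and the same unrolling in $k$ via the harmonic-sum estimate. You also correctly identify the simplification $\sign^2\equiv 1$ that removes the Isserlis computation, and the exact role of the two branches of the condition on $c_2$.
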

Hence for $\ell\geq d,$ aFGD($\ell$) also achieves the minimax optimal rate $d/k$.
\begin{proof}[Proof of Theorem \ref{thm: afgd mspe}]
In order to minimize redundancies, we only provide the main steps of the proof.
The definition of aFGD implies
\begin{align*}
 &   (\omega_{k,r}-\theta_\star)(\omega_{k,r}-\theta_\star)^\top
 % \\
 % &=(\big(\mathbb{I}_d-\beta_k\|X_k\| \sign(X_k^\top \xi_{k,r})\xi_{k,r}X_k^\top\big)(\omega_{k,r-1}-\theta_\star)
 % +\beta_k\ep_k\|X_k\| \sign(X_k^\top \xi_{k,r})\xi_{k,r})
 % \\&\quad(\big(\mathbb{I}_d-\beta_k\|X_k\| \sign(X_k^\top \xi_{k,r})\xi_{k,r}X_k^\top\big)(\omega_{k,r-1}-\theta_\star)
 % +\beta_k\ep_k\|X_k\| \sign(X_k^\top \xi_{k,r})\xi_{k,r})^\top  
 % \\
 % &=\big(\mathbb{I}_d-\beta_k\|X_k\| \sign(X_k^\top \xi_{k,r})\xi_{k,r}X_k^\top\big)(\omega_{k,r-1}-\theta_\star)(\big(\mathbb{I}_d-\beta_k\|X_k\| \sign(X_k^\top \xi_{k,r})\xi_{k,r}X_k^\top\big)(\omega_{k,r-1}-\theta_\star))^\top
 % \\&\quad+\big(\mathbb{I}_d-\beta_k\|X_k\| \sign(X_k^\top \xi_{k,r})\xi_{k,r}X_k^\top\big)(\omega_{k,r-1}-\theta_\star)(\beta_k\ep_k\|X_k\| \sign(X_k^\top \xi_{k,r})\xi_{k,r})^\top
 % \\&\quad +\beta_k\ep_k\|X_k\| \sign(X_k^\top \xi_{k,r})\xi_{k,r} (\big(\mathbb{I}_d-\beta_k\|X_k\| \sign(X_k^\top \xi_{k,r})\xi_{k,r}X_k^\top\big)(\omega_{k,r-1}-\theta_\star))^\top
 % \\&\quad +\beta_k\ep_k\|X_k\| \sign(X_k^\top \xi_{k,r})\xi_{k,r}(\beta_k\ep_k\|X_k\| \sign(X_k^\top \xi_{k,r})\xi_{k,r})^\top
  \\
 &=\Big(\mathbb{I}_d-\beta_k\|X_k\| \sign(X_k^\top \xi_{k,r})\xi_{k,r}X_k^\top\Big)(\omega_{k,r-1}-\theta_\star)(\omega_{k,r-1}-\theta_\star)^\top
 \\&\quad\cdot\Big(\mathbb{I}_d-\beta_k\|X_k\| \sign(X_k^\top \xi_{k,r})\xi_{k,r}X_k^\top\Big)^\top
 \\
 &\quad+\beta_k\ep_k\|X_k\|\sign(X_k^\top \xi_{k,r}) \Big((\omega_{k,r-1}-\theta_\star) \xi_{k,r}^\top+\xi_{k,r}(\omega_{k,r-1}-\theta_\star)^\top\Big)
 \\
 &\quad-2\beta_k^2\ep_k\|X_k\|^2X_k^\top(\omega_{k,r-1}-\theta_\star)\xi_{k,r}\xi_{k,r}^\top
 +\beta_k^2\ep_k^2\|X_k\|^2 \xi_{k,r}\xi_{k,r}^\top.
\end{align*}
Arguing as in the proof of Lemma \ref{lemma: ep}, one can derive
\begin{align*}
  &\E[\ep_k\omega_{k,r}\vert X_k]
 %  \\
 %   &=\E[\ep_k(\omega_{k,r}-\theta_\star)\vert X_k]
 %    \\
 %   &=\E\bigg[\ep_k\bigg(\big(\mathbb{I}_d-\beta_k\|X_k\| \sign(X_k^\top \xi_{k,r})\xi_{k,r}X_k^\top\big)(\omega_{k,r-1}-\theta_\star)
 % +\beta_k\ep_k\|X_k\| \sign(X_k^\top \xi_{k,r})\xi_{k,r}  \bigg)\vert X_k\bigg]
 %     \\
 %   &=  \E\bigg[\ep_k\big(\mathbb{I}_d-\beta_k\|X_k\| \sign(X_k^\top \xi_{k,r})\xi_{k,r}X_k^\top\big)(\omega_{k,r-1}-\theta_\star)
 % \vert X_k\bigg]
 % \\
 % &\quad+ \beta_k  
 % \E\bigg[\ep_k^2\|X_k\| \sign(X_k^\top \xi_{k,r})\xi_{k,r}  \vert X_k\bigg]
 %      \\
 %   &=  \E\bigg[\ep_k\big(\mathbb{I}_d-\beta_k\|X_k\| \sign(X_k^\top \xi_{k,r})\xi_{k,r}X_k^\top\big)\omega_{k,r-1}
 % \vert X_k\bigg]
 % \\
 % &\quad+ \beta_k  
 % \E\bigg[\|X_k\| \sign(X_k^\top \xi_{k,r})\xi_{k,r}  \vert X_k\bigg]
 %       \\
 %   &=  \E\bigg[\mathbb{I}_d-\beta_k\|X_k\| \sign(X_k^\top \xi_{k,r})\xi_{k,r}X_k^\top
 % \vert X_k\bigg]\E[\ep_k\omega_{k,r-1}
 % \vert X_k]
 % + \sqrt{2/\pi}\beta_k X_k
   =  \big(\mathbb{I}_d-\sqrt{2/\pi}\beta_kX_kX_k^\top
\big)\E[\ep_k\omega_{k,r-1}
 \vert X_k]
 + \sqrt{2/\pi}\beta_k X_k.
\end{align*}
Combined with the independence of $\ep_k$ and $\omega_{k-1,\ell},$
\begin{align*}
  &\E[\ep_k\omega_{k,r}\vert X_k]
%   \\
%    &=  \big(\mathbb{I}_d-\sqrt{2/\pi}\beta_kX_kX_k^\top
% \big)^r\E[\ep_k\omega_{k-1,\ell}
%  \vert X_k]
%  + \sqrt{2/\pi}\beta_k \sum_{i=0}^{r-1}\big(\mathbb{I}_d-\sqrt{2/\pi}\beta_kX_kX_k^\top
% \big)^iX_k
  % \\
  %  &
   =  \sqrt{2/\pi}\beta_k \sum_{i=0}^{r-1}\big(1-\sqrt{2/\pi}\beta_kX_k^\top X_k
\big)^iX_k.
\end{align*}
Hence, since $\sqrt{2/\pi}\beta_k X_k^\top X_k\leq 1$
\begin{align*}
&\E\Big[\ep_k(\omega_{k,r-1}-\theta_\star)\big(\|X_k\| \sign(X_k^\top \xi_{k,r})\xi_{k,r}\big)^\top\Big \vert X_k\Big]
% \\
% &=\E[\ep_k\omega_{k,r-1}(\|X_k\| \sign(X_k^\top \xi_{k,r})\xi_{k,r})^\top\vert X_k]
% \\
% &=\E[\ep_k\omega_{k,r-1}\vert X_k]
% \E[\|X_k\| \sign(X_k^\top \xi_{k,r})\xi_{k,r}\vert X_k]^\top
% \\
% &=(2/\pi)\beta_k \sum_{i=0}^{r-2}\big(1-\sqrt{2/\pi}\beta_kX_k^\top X_k
% \big)^iX_kX_k^\top 
% \\
% &
\leq_{\mathsf{L}} \frac 2{\pi}r\beta_k X_kX_k^\top, 
\end{align*}
and
\begin{align*}
    &\E[\ep_k\|X_k\|^2 \xi_{k,r}X_k^\top(\omega_{k,r-1}-\theta_\star)\xi_{k,r}^\top\vert X_k]
    % \\
    % &=\|X_k\|^2X_k^\top\E[\ep_k\omega_{k,r-1} \xi_{k,r}\xi_{k,r}^\top\vert X_k]
%         \\
%     &=\|X_k\|^2X_k^\top\E[\ep_k\omega_{k,r-1} \vert X_k]\mathbb{I}_d
%             \\
%     &=\sqrt{2/\pi}\beta_k\|X_k\|^4 \sum_{i=0}^{r-2}\big(1-\sqrt{2/\pi}\beta_kX_k^\top X_k
% \big)^i\mathbb{I}_d
% \\
% &
\geq_{\mathsf{L}}0.
\end{align*}
Additionally it holds
\begin{align*}
    \E[\ep_k^2\|X_k\|^2 \xi_{k,r}\xi_{k,r}^\top\vert X_k]=\Vert X_k\Vert^2\mathbb{I}_d,
\end{align*}
and thus
\begin{align}
\begin{split}\label{eq: afgd mse 1}
  &   \E[(\omega_{k,r}-\theta_\star)(\omega_{k,r}-\theta_\star)^\top\vert X_k]
 \\
 &\leq_{\mathsf{L}}\E\Big[\big(\mathbb{I}_d-\beta_k\|X_k\| \sign(X_k^\top \xi_{k,r})\xi_{k,r}X_k^\top\big)(\omega_{k,r-1}-\theta_\star)\\&\quad\cdot(\omega_{k,r-1}-\theta_\star)^\top\big(\mathbb{I}_d-\beta_k\|X_k\| \sign(X_k^\top \xi_{k,r})\xi_{k,r}X_k^\top\big)^\top\vert X_k\Big]
 \\
 &\quad+\beta_k^2\Big(\frac{4}{\pi}r X_kX_k^\top
 +\Vert X_k\Vert^2\mathbb{I}_d\Big)
 \\
 &=
  \E\Big[(\omega_{k,r-1}-\theta_\star)(\omega_{k,r-1}-\theta_\star)^\top\vert X_k\Big]-\sqrt{2/\pi}\beta_kX_kX_k^\top\E\Big[(\omega_{k,r-1}-\theta_\star)(\omega_{k,r-1}-\theta_\star)^\top\vert X_k\Big]
 \\
 &\quad-\sqrt{2/\pi}\beta_k\E\Big[(\omega_{k,r-1}-\theta_\star)(\omega_{k,r-1}-\theta_\star)^\top \vert X_k\Big]X_kX_k^\top
 \\&\quad+\beta_k^2\Vert X_k\Vert^2\E[X_k^\top(\omega_{k,r-1}-\theta_\star)(\omega_{k,r-1}-\theta_\star)^\top X_k \vert X_k]\mathbb{I}_d
+\beta_k^2\Big(\frac 4{\pi}r X_kX_k^\top
 +\Vert X_k\Vert^2\mathbb{I}_d\Big).  
\end{split}
\end{align}
Multiplying $X_k^\top$ from the left and $X_k$ from the right gives
\begin{align*}
   &\E[X_k^\top(\omega_{k,r}-\theta_\star)(\omega_{k,r}-\theta_\star)^\top X_k \vert X_k] 
%    \\
%    &\leq(1-\sqrt{8/\pi}\beta_k \Vert X_k\Vert^2+\beta_k^2\Vert X_k\Vert^4) \E\Big[X_k^\top(\omega_{k,r-1}-\theta_\star)(\omega_{k,r-1}-\theta_\star)^\top X_k\vert X_k\Big]
%  \\&\quad
% +\beta_k^2\Vert X_k\Vert^4(\frac{4}{\pi}r 
%  +1)
    \\
   &\leq \Big(1-\sqrt{\frac{8}{\pi}}\beta_k \Vert X_k\Vert^2+\beta_k^2\Vert X_k\Vert^4\Big) \E\Big[X_k^\top(\omega_{k,r-1}-\theta_\star)(\omega_{k,r-1}-\theta_\star)^\top X_k\vert X_k\Big]
+\frac 8{\pi}r\beta_k^2\Vert X_k\Vert^4.
\end{align*}
Combined with $\beta_k\Vert X_k\Vert^2\leq \sqrt{2/\pi},$ this implies
\begin{align*}
   &\E[X_k^\top(\omega_{k,r}-\theta_\star)(\omega_{k,r}-\theta_\star)^\top X_k \vert X_k] 
% \\
%    &\leq(1-\sqrt{8/\pi}\beta_k \Vert X_k\Vert^2+\beta_k^2\Vert X_k\Vert^4)^r \E\Big[X_k^\top(\omega_{k-1,\ell}-\theta_\star)(\omega_{k-1,\ell}-\theta_\star)^\top X_k\vert X_k\Big]
% \\&\quad+(8/\pi)\beta_k^2\Vert X_k\Vert^4\sum_{i=1}^{r} i(1-\sqrt{8\pi}\beta_k \Vert X_k\Vert^2+\beta_k^2\Vert X_k\Vert^4)^{r-i}
\\
   &\leq \Big(1-\sqrt{\frac 2{\pi}}\beta_k \Vert X_k\Vert^2\Big)^r X_k^\top\E\Big[(\omega_{k-1,\ell}-\theta_\star)(\omega_{k-1,\ell}-\theta_\star)^\top \Big]X_k
+\frac 4{\pi}\beta_k^2\Vert X_k\Vert^4r(r+1).
\end{align*}
As also $\beta_k\ell\Vert X_k\Vert^2\leq \sqrt{2/\pi}$, inserting this inequality back into \eqref{eq: afgd mse 1} yields
\begin{align*}
  &   \E[(\omega_{k,\ell}-\theta_\star)(\omega_{k,\ell}-\theta_\star)^\top\vert X_k]
 \\
 &\leq_{\mathsf{L}}
  \E\Big[(\omega_{k,\ell-1}-\theta_\star)(\omega_{k,\ell-1}-\theta_\star)^\top\vert X_k\Big]-\sqrt{2/\pi}\beta_kX_kX_k^\top\E\Big[(\omega_{k,r-1}-\theta_\star)(\omega_{k,\ell-1}-\theta_\star)^\top\vert X_k\Big]
 \\
 &\quad-\sqrt{2/\pi}\beta_k\E\Big[(\omega_{k,\ell-1}-\theta_\star)(\omega_{k,\ell-1}-\theta_\star)^\top \vert X_k\Big]X_kX_k^\top
 \\&\quad+\beta_k^2\Vert X_k\Vert^2\Big((1-\sqrt{2/\pi}\beta_k \Vert X_k\Vert^2)^{\ell-1} X_k^\top\E\Big[(\omega_{k-1,\ell}-\theta_\star)(\omega_{k-1,\ell}-\theta_\star)^\top \Big]X_k
 \\&\quad+\frac{4}{\pi}\beta_k^2\Vert X_k\Vert^4\ell^2\Big)\mathbb{I}_d
+\beta_k^2\Big(\frac{4}{\pi}\ell X_kX_k^\top
 +\Vert X_k\Vert^2\mathbb{I}_d\Big)
  \\
 &\leq_{\mathsf{L}}
  \Big(\mathbb{I}_d-\sqrt{2/\pi}\beta_kX_kX_k^\top\Big)\E\Big[(\omega_{k,\ell-1}-\theta_\star)(\omega_{k,\ell-1}-\theta_\star)^\top\vert X_k\Big]\Big(\mathbb{I}_d-\sqrt{2/\pi}\beta_kX_kX_k^\top\Big)
 \\&\quad+\beta_k^2\Vert X_k\Vert^2X_k^\top\E\Big[(\omega_{k-1,\ell}-\theta_\star)(\omega_{k-1,\ell}-\theta_\star)^\top \Big]X_k
 \mathbb{I}_d
\\&\quad
+\beta_k^2\Big(\frac{4}{\pi}\ell X_kX_k^\top
 +\Vert X_k\Vert^2\mathbb{I}_d+\frac{4}{\pi}\beta_k^2\Vert X_k\Vert^6 \ell^2\mathbb{I}_d\Big)
 \\
 &\leq_{\mathsf{L}}
  \Big(\mathbb{I}_d-\sqrt{2/\pi}\beta_kX_kX_k^\top\Big)^\ell\E\Big[(\omega_{k-1,\ell}-\theta_\star)(\omega_{k-1,\ell}-\theta_\star)^\top\Big]\Big(\mathbb{I}_d-\sqrt{2/\pi}\beta_kX_kX_k^\top\Big)^\ell
 \\&\quad+\beta_k^2\Vert X_k\Vert^2X_k^\top\E\Big[(\omega_{k-1,\ell}-\theta_\star)(\omega_{k-1,\ell}-\theta_\star)^\top \Big]X_k
\sum_{i=0}^{\ell-1}\Big(\mathbb{I}_d-\sqrt{2/\pi}\beta_kX_kX_k^\top\Big)^{2i}
\\&\quad
+\beta_k^2\Big(\frac{4}{\pi}\ell X_kX_k^\top
 +2\Vert X_k\Vert^2\mathbb{I}_d\Big)\sum_{i=0}^{\ell-1}\Big(\mathbb{I}_d-\sqrt{2/\pi}\beta_kX_kX_k^\top\Big)^{2i}
  \\
 &\leq_{\mathsf{L}}
  \Big(\mathbb{I}_d-\sqrt{2/\pi}\beta_kX_kX_k^\top\Big)^\ell\E\Big[(\omega_{k-1,\ell}-\theta_\star)(\omega_{k-1,\ell}-\theta_\star)^\top\Big]\Big(\mathbb{I}_d-\sqrt{2/\pi}\beta_kX_kX_k^\top\Big)^\ell
 \\&\quad+\beta_k^2\ell\Vert X_k\Vert^2X_k^\top\E\Big[(\omega_{k-1,\ell}-\theta_\star)(\omega_{k-1,\ell}-\theta_\star)^\top \Big]X_k\mathbb{I}_d
\\&\quad
+\beta_k^2\ell\Big(\frac{4}{\pi}\ell X_kX_k^\top
 +2\Vert X_k\Vert^2\mathbb{I}_d\Big).
\end{align*}
Arguing similarly as in \eqref{eq: 1} gives
\begin{align*}
  \Big(\mathbb{I}_d-\sqrt{2/\pi}\beta_kX_kX_k^\top\Big)^{\ell}
  % &=\mathbb{I}_d-\sqrt{2/\pi}\beta_kX_kX_k^\top\sum_{i=0}^{\ell-1}\Big(1-\sqrt{2/\pi}\beta_kX_k^\top X_k\Big)^{i}
  % \\
  % &\leq_{\mathsf{L}}\mathbb{I}_d-\sqrt{2/\pi}\beta_kX_kX_k^\top\sum_{i=0}^{\ell-1}\Big(1-\sqrt{2/\pi}\beta_kb\Big)^{i}
  %   \\
  % &=\mathbb{I}_d-\frac{1-(1-\sqrt{2/\pi}\beta_kb)^{\ell}}{b}X_kX_k^\top
  %    \\
  % &\leq_{\mathsf{L}}\mathbb{I}_d-\frac{1-\exp(-\ell\sqrt{2/\pi}\beta_kb)}{b}X_kX_k^\top
  %    \\
  &\leq_{\mathsf{L}}\mathbb{I}_d-\frac{\ell\beta_k}{\sqrt{2\pi}}X_kX_k^\top,
\end{align*}
and because $\ell\sqrt{2/\pi}\beta_kb\leq 3/2$, the above inequalities give 
\begin{align*}
  &   \operatorname{MSE}(\omega_{k,\ell})
  \\
 &\leq
  \tr\Big(\E\Big[\Big(\mathbb{I}_d-\sqrt{2/\pi}\beta_kX_kX_k^\top\Big)^{2\ell}\Big]\E\Big[(\omega_{k-1,\ell}-\theta_\star)(\omega_{k-1,\ell}-\theta_\star)^\top\Big]\Big)
 \\&\quad+\beta_k^2d\ell b\tr\Big(\Sigma \E\Big[(\omega_{k-1,\ell}-\theta_\star)(\omega_{k-1,\ell}-\theta_\star)^\top \Big] \Big)
\\&\quad
+\beta_k^2\tr(\Sigma)\ell\Big(\frac{4}{\pi}\ell 
 +2d\Big)
  \\
 &\leq
  \lambda_{\max}\Big(\E\Big[\mathbb{I}_d-\frac{\ell\beta_k}{\sqrt{2\pi}}X_kX_k^\top\Big]\Big)\operatorname{MSE}(\omega_{k-1,\ell})
+\beta_k^2d\ell b\lambda_{\max}(\Sigma)\operatorname{MSE}(\omega_{k-1,\ell})
\\&\quad
+\beta_k^2\tr(\Sigma)\ell\Big(\frac{4}{\pi}\ell 
 +2d\Big)
   \\
 &=
\Big(1-\frac{\ell\beta_k}{\sqrt{2\pi}}\lambda_{\min}(\Sigma)+\beta_k^2d\ell b\lambda_{\max}(\Sigma)\Big)\operatorname{MSE}(\omega_{k-1,\ell})
+\beta_k^2\tr(\Sigma)\ell\Big(\frac{4}{\pi}\ell 
 +2d\Big).
\end{align*}
Noting that $\beta_kdb\lambda_{\max}(\Sigma)\leq\lambda_{\min}(\Sigma)/\sqrt{8\pi}, $ arguing as in the proof of Theorem \ref{thm: mspe} and denoting $c=c_1c_2$ yields
\begin{align*}
  &   \operatorname{MSE}(\omega_{k,\ell})
   \\
 &\leq
\operatorname{MSE}(\omega_{0})\prod_{i=1}^k\Big(1-\frac{\ell\beta_i}{\sqrt{8\pi}}\lambda_{\min}(\Sigma)\Big)
\\&\quad+\sum_{i=1}^k\beta_i^2\tr(\Sigma)\ell\Big(\frac{4}{\pi}\ell+2d\Big) \prod_{m=i+1}^k\Big(1-\frac{\ell\beta_m}{\sqrt{8\pi}}\lambda_{\min}(\Sigma)\Big)
   \\
 &\leq\left(\frac{1+c}{k+1+c}\right)^2
\operatorname{MSE}(\omega_{0})
+\tr(\Sigma)\ell\Big(\frac{4}{\pi}\ell+2d
\Big)\sum_{i=1}^k\beta_i^2 \left(\frac{i+1+c}{k+1+c}\right)^2
   \\
 &\leq\left(\frac{1+c}{k+1+c}\right)^2
\operatorname{MSE}(\omega_{0})
+8c_1^2\tr(\Sigma)\Big(\frac{2}{\pi}+\frac{d}{\ell}\Big)\frac{k}{(k+1+c)^2},
\end{align*}
completing the proof. 
\end{proof}
\printbibliography
\end{document}